\newcommand{\leqnomode}{\tagsleft@true}
\newcommand{\reqnomode}{\tagsleft@false}
\newtheorem{theorem}{Theorem}[section]
\newtheorem{definition}[theorem]{Definition}
\newtheorem{lemma}[theorem]{Lemma}
\newtheorem{remark}[theorem]{Remark}
\newenvironment{taggedtheorem}[1]
 {\taggedtheoremx}
 {\endtaggedtheoremx}
\newenvironment{proof}[1][Proof]{\textbf{#1.} }{\hfill\rule{0.5em}{0.5em}}
{\catcode`\@=11\global\let\AddToReset=\@addtoreset
\AddToReset{equation}{section}

\AddToReset{theorem}{section}

\title{Lorentz estimates for quasi-linear elliptic double obstacle problems involving a Schr{\"o}dinger term}
\author{Thanh-Nhan Nguyen\thanks{Department of Mathematics, Ho Chi Minh City University of Education, Ho Chi Minh City, Vietnam; \texttt{nhannt@hcmue.edu.vn}}, Minh-Phuong Tran\footnote{Corresponding author.}\thanks{Applied Analysis Research Group, Faculty of Mathematics and Statistics, Ton Duc Thang University, Ho Chi Minh City, Vietnam; \texttt{tranminhphuong@tdtu.edu.vn}}}

\date{\today}

\begin{document}
\maketitle
\begin{abstract}

Our goal in this article is to study the global Lorentz estimates for gradient of weak solutions to $p$-Laplace double obstacle problems involving the Schr\"odinger term: $-\Delta_p u + \mathbb{V}|u|^{p-2}u$  with bound constraints $\psi_1 \le u \le \psi_2$ in non-smooth domains. This problem has its own interest in mathematics, engineering, physics and other branches of science. Our approach makes a novel connection between the study of Calder\'on-Zygmund theory for nonlinear Schr\"odinger type equations and variational inequalities for double obstacle problems.

\noindent 

\medskip

\noindent Keywords:  Double obstacle problem; Quasi-linear elliptic equations; Time independent Schr\"odinger type; Regularity; Lorentz estimate; Reifenberg flat domain.

\medskip

\noindent 2020 Mathematics Subject Classifications: 35J10, 35J15, 35J62, 35J92.

\end{abstract}   
                  

\section{Introduction}
\label{sec:intro}
The calculus of variations is one of the classical and renowned topics in mathematical analysis, that has a wide range of interesting applications in many areas of physics, economics, engineering,  biology and so on. Most of problems in calculus of variations have origins in physics where one wishes to minimize (or maximize) the energy functionals subject to the given constraints. Apart from the studies on the existence and uniqueness of the solution to a variational problem in calculus of variations, the regularity (or smoothness) of such minimizers has also been the objective of intensive researches in recent years.

A problem in calculus of variations modeled with an inequality constraint leads to an obstacle problem, that can be characterized by a variational inequality. Theory of variational inequalities and free boundary problem, date back to the seminal works of Fichera,  Kinderlehrer and Stampacchia in \cite{Fichera, Stampacchia, KS1980}, was a classical topic that has attracted much attention in the last several years. The obstacle problem was motivated by many applications arising from physics, mechanics, engineering and other fields of applied sciences such as  membrane-fluid, fluid  filtration in porous media, investments with transaction costs in financial mathematics, elastic-plastic torsion, a game of `tug-of-war', etc. To the intimate connections, we recommend the reader to reference books in~\cite{Friedman,Troianiello,KS1980, Rodfrigues1987} for further mathematical problems and applications. The  prototype form of the obstacle problems is to find minimizers of integral energy functionals
\begin{align*}
\min_{u \ge \psi}{\int_\Omega{F(x,u(x),\nabla u(x)) dx}},
\end{align*}
in some domain $\Omega \subset \mathbb{R}^n$, where the unknown function $u$ is constrained to be greater than a given obstacle $\psi$. The study of obstacle problems is extended to several types of divergence form of elliptic operators and their suitable associated energy functionals. There have been a lot of works on the regularity of solutions for elliptic and parabolic variational inequalities with one obstacle constraint. In particular, in \cite{Choe1991, CL1991, Choe2016} authors proved the $C^{0,\alpha}$ and $C^{1,\alpha}$ regularity for quasilinear elliptic obstacle problems, H\"older continuity for minimizers of integral functionals under standard and non-standard growth in the works of Eleuteri \textit{et al.} \cite{Eleuteri2007, EH2011, EHL2013}, Calder\'on-Zygmund theory for elliptic/parabolic obstacle problems proposed in \cite{Choe2016,BDM2011,BCW2012}. Further, some mathematical tools have been developed to deal with obstacle with measure data (see \cite{Scheven1, Scheven2}), higher integrability (see \cite{BS2012}), Lorentz estimates (see \cite{Baroni2014}), etc and concerned papers are too many to cite, we only list here some of them for interesting readers.   

Along with the obstacle issues, the \emph{double obstacle problems} are also of interest. As seen with the word `double', problems can be further generalized with two obstacles: solution is constrained to lie between the lower and upper functions, i.e. $\psi_1 \le u \le \psi_2$. The line of research on double obstacle problems has also been developed in a rich literature. Devoted to regularity of solutions, we refer for instance to \cite{MMV1989} related to the double obstacle problems with linear operators and bounded measurable coefficients; problems involving degenerate elliptic operators in \cite{MZ1991, KZ1991,Lieberman1991}, or in \cite{BFM2001} for local $C^{1,\alpha}$ estimates with non-standard growth; and a lot of works treating the regularity estimates in certain spaces (see \cite{RT2011, BLO2020, BR2020}). As far as we are concerned, there seem to be fewer studies on double obstacle problems than single obstacle ones, even though it also arises in many applications. Hence, in the present paper, we will suggest an approach to prove the global regularity of solutions to double obstacle problems involving Schr\"odinger operators. 

More precisely, in this article, we consider the $p$-Laplace Schr\"odinger double obstacle problems, where the nonlinearity operator involves the $p$-Laplace $\Delta_p u = \mathrm{div}(|\nabla u|^{p-2}\nabla u)$ and a singular potential $\mathbb{V}$. Let $\Omega$ be an open bounded domain of $\mathbb{R}^n$ ($n \ge 2$);  $p \in (1,\infty)$ and $\mathbf{F} \in L^p(\Omega;\mathbb{R}^n)$. Given $\psi_1$, $\psi_2 \in W^{1,p}(\Omega)$ are two fixed functions such that $\psi_1 \le \psi_2$ almost everywhere in $\Omega$ and $\psi_1 \le 0 \le \psi_2$ on $\partial \Omega$, let us introduce the following convex admissible set related to $\psi_1$ and $\psi_2$ by
\begin{align}\label{def:S-0}
\mathbb{K}:= \left\{w \in W^{1,p}_0(\Omega): \ \psi_1 \le w \le \psi_2 \ \mbox{ a.e. in }\Omega\right\}.
\end{align}
We can summarize the form of such problem: to find the unknown function $u$ such that
\begin{align}
\label{eq:Lap_Schro}
- \Delta_p u + \mathbb{V}|u|^{p-2}u \le - \mathrm{div}(|\mathbf{F}|^{p-2}\mathbf{F}), \quad \text{a.e. in} \ \mathbb{K}.
\end{align}
Naturally, in the sense of calculus of variations, this problem related to the minimizers of the constrained functional
\begin{align}
\label{eq:min}
w \mapsto \frac{1}{p}\int_{\Omega}{\left( |\nabla w|^p + \mathbb{V}|w|^p \right) dx} - \int_\Omega{\langle |\mathbf{F}|^{p-2}\mathbf{F},\nabla w \rangle dx}, \ \mbox{ in } \ \mathbb{K}.
\end{align}

In the case when $\mathbb{V}\equiv 0$, one simply has the double obstacle problem for $p$-Laplacian. A plenty of regularity results are obtained in this respect, Choe in \cite{Choe1992} obtained $C^{0,\alpha}$ and $C^{1,\alpha}$-estimates for solutions and a extensive literature on regularity for solutions/minima to a class of variational integrals of more general types. For instance, for nonlinear elliptic double obstacle problems, the interior H\"older estimate for problems nonstandard growth studied in \cite{BFM2001}, global $L^q$ estimates by Byun \textit{et al.} in \cite{BR2020,BLO2020}. Associated with nonlinear elliptic equations (without obstacles), there have been various results pertaining to regularity theory and Calder\'on-Zygmund, together with some technical issues discussed in \cite{AM2007,Duzamin2,Mi3, CoMi2016, BW2, CP1998, 55QH4, MPT2018, MP12, PNJDE} and the further references to be continued.

In the presence of an appropriate potential $\mathbb{V}(x)$, we have the time independent Schr\"odinger problems of $p$-Laplace type.  \emph{Without obstacles}, equation \eqref{eq:Lap_Schro} is one of the most important research issues in classical physics, that has been broadly studied in recent decades. Arising in different physical contexts, elliptic equations involving Schr\"odinger operators have wide applications, specifically in quantum mechanics, non-Newtonian fluid theory and gas flow in porous media, etc, (see \cite{BS1991}). There is extensive literature on the study of gradient estimates (regularity) for linear/nonlinear elliptic Schr\"odinger type equations, we refer to the recent works \cite{AMS2001,Sugano,Liu,Shen,Shen2,BBHV,LO_schro} where it is possible to find further references therein. For instance, for $p=2$, the $L^q$-estimates and Calder\'on-Zygmund type estimates were established by Shen in a fine paper \cite{Shen} when the nonnegative potential $\mathbb{V}$ belongs to the reverse H\"older class $\mathcal{RH}^{\gamma}$, $\gamma >\frac{n}{p}$. An additive interesting result was later obtained by Sugano in \cite{Sugano} when potential $\mathbb{V}$ moreover includes non-negative polynomials. The extension of such results were provided for non-divergence form linear elliptic equations with VMO coefficients in \cite{BBHV}, and for nonlinear divergence elliptic equations recently established by Lee and Ok in \cite{LO_schro}, they further states global $L^q$ estimates in terms of Calder\'on-Zygmund in the same paper.

In the present paper, we are interested in the study of double obstacle problems for quasilinear elliptic equations involving Schr\"odinger operators. On the whole, we extend the results of \cite{LO_schro} from variational equations to variational inequalities with constraints, and moreover, the global regularity estimates of the weak solutions are obtained in the setting of Lorentz spaces.

Let us here describe the way of formulating the double obstacle problems discussed in our work. Here,  we are interested in a class of quasilinear elliptic double obstacle problems, which is more general than that of  \eqref{eq:Lap_Schro}.  We actually consider the problems involving Schr\"odinger term of the type
\begin{align}\label{eq:quasi}
-\mathrm{div}(\mathcal{A}(x,\nabla u)) + \mathbb{V} |u|^{p-2} u \le  f -\mathrm{div}(\mathcal{B}(x,\mathbf{F})), \quad \text{a.e. in}\ \ \mathbb{K}.
\end{align}
Here, $\mathcal{A}, \mathcal{B}: \ \Omega \times \mathbb{R}^n \rightarrow \mathbb{R}$ are quasi-linear Carath\'eodory vector valued operators satisfying the natural conditions: there exist constants $p \in (1,\infty)$ and $\Lambda>0$ such that 
\begin{align}\label{eq:A1-DOP}
& \hspace{1cm} \left| \mathcal{A}(x,\mu) \right| + \left|\langle D_{\mu} \mathcal{A}(x,\mu), \mu\rangle \right| +  \left| \mathcal{B}(x,\mu) \right|  \le \Lambda |\mu|^{p-1}, \\ \label{eq:A2-DOP}
& \langle \mathcal{A}(x,\mu_1)-\mathcal{A}(x,\mu_2), \mu_1 - \mu_2 \rangle \ge \Lambda^{-1} \left(|\mu_1|^2 + |\mu_2|^2 \right)^{\frac{p-2}{2}}|\mu_1 - \mu_2|^2,
\end{align}
for almost every $x$ in $\Omega$ and every $\mu$, $\mu_1$, $\mu_2 \in \mathbb{R}^n \setminus \{0\}$. 

Before formulating the main results, let us introduce several important and relevant terminologies. Throughout this paper, we consider the potential $\mathbb{V}$ which belongs to the reverse H{\"o}lder class $\mathcal{RH}^{\gamma}$, for some $\gamma \in [\frac{n}{p},n)$. Moreover we require an extra assumption on $\mathbb{V}$, that $\|\mathbb{V}\|_{L^{\gamma;p\gamma}(\Omega)} \le 1$ for better gradient estimates (see Section \ref{sec:pre} for detailed explanation). Further, it is also remarkable that the given data $\mathbf{F} \in L^p(\Omega, \mathbb{R}^n)$, $f \in L^{p'}(\Omega)$ ($p'$ stands for the exponent conjugate to $p$), and $\psi_1, \psi_2 \in W^{1,p}(\Omega)$ for $p \in (1,n)$. For simplicity, we shall denote  
$$|\mathcal{F}|^p := |f|^{p'} + |\mathbf{F}|^{p} +  \mathbb{E}(\psi_1) + \mathbb{E}(\psi_2),$$ 
where the function $\mathbb{E}: \ W^{1,p}(\Omega) \to [0,\infty)$ defined by $\mathbb{E}(v) = |\nabla v|^p + \mathbb{V}|v|^p$. It is worth pointing out that $\mathbb{V}|v|^p \in L^1(\Omega)$ for every $v \in W^{1,p}(\Omega)$ and $\mathbb{V} \in \mathcal{RH}^{\gamma}$ by H{\"o}lder's and Sobolev's inequalities. 

In the present paper, we study the weak solutions to double obstacle problem \eqref{eq:quasi}, i.e. solutions of the weak formulation of variational inequality. The natural notion of weak solutions is given as follows.
\begin{definition}\label{def:weak_sol}
Let $\mathbf{F} \in L^p(\Omega;\mathbb{R}^n)$ and $f \in L^{p'}(\Omega)$ for $p>1$. We say that a function $u \in \mathbb{K}$ is a weak solution to double obstacle problem~\eqref{eq:quasi} if the following variational inequality
\begin{align}\nonumber
\int_{\Omega} \langle \mathcal{A}(x,\nabla u), \nabla (u - \varphi) \rangle dx & + { \int_{\Omega} \langle \mathbb{V}|u|^{p-2}u, u - \varphi\rangle  dx} \\ \tag{$\mathbf{P}$}\label{eq:DOP}
 & \qquad \le { \int_{\Omega} f(u - \varphi)  dx} + \int_{\Omega} \langle \mathcal{B}(x, \mathbf{F}), \nabla (u - \varphi) \rangle dx, 
\end{align}
holds for all test functions $\varphi \in \mathbb{K}$.
\end{definition}

It is worth mentioning that this paper does not focus on the existence of a solution, but as a consequence of theory for monotonicity and coercive properties (from \cite[Chapter 4]{KS1980} and \cite{RT2011,BBHV}), it enables us to establish the existence of solutions to our problem. Moreover, if $f=0$ and $\mathbb{V}\equiv 0$, there exists a positive constant $C$ such that the following estimate holds
\begin{align*}
\|\nabla u\|_{L^p(\Omega)} \le C \left(\|\mathbf{F}\|_{L^p(\Omega)} + \|\nabla \psi_1\|_{L^p(\Omega)} + \|\nabla \psi_2\|_{L^p(\Omega)}\right).
\end{align*}

Before discussing the technique used in this paper, let us review some studies in Calder\'on-Zygmund theory regarding  the nonlinear elliptic and parabolic partial differential equations. Originating in a beautiful approach by Caffarelli and Peral \cite{CP1998}, to obtain local $W^{1,p}$ estimates for a class of $p$-Laplace equations (relies on the Calder\'on-Zygmund decomposition and the boundedness of Hardy-Littlewood maximal functions), many far reaching results have been growing  since then. Also here, we would like to mention an outstanding work, shown in a paper \cite{AM2007} by Acerbi and Mingione. They presented an effective technique to give a proof of Calder\'on-Zygmund estimates with no use of harmonic analysis and maximal operators.  Afterwards, this approach has yielded a multitude of beautiful results in regularity theory. A different approach for constructing $L^q$ estimate for higher order elliptic and parabolic systems (divergence and non-divergence types) was proposed by Dong, Kim and Krylov in \cite{DK2011,Krylov}. This approach based on estimates involving sharp and maximal functions using a version of the Fefferman-Stein theorem, is also a successful tool to deal with Calder\'on-Zygmund. Moreover, it is also important for us here to point out the geometrical approach firstly presented by Byun and Wang in \cite{BW2}. This is a unifying method to prove the interior and boundary estimates for weak solutions of a class of general elliptic/parabolic equations, valuable and has been successfully applied in many regularity results. Our technique is somewhat the improvement of a surprising approach introduced in \cite{Mi3,AM2007,BW2} dealt with the Calder\'on-Zygmund estimates for quasilinear elliptic/parabolic systems. The approach then inspired a lot of other research on the regularity theory of certain partial differential equations (see \cite{55QH4,MPT2018,PNCRM,PNJDE,PNnonuniform,PNmix}). Motivated by such effective method, in our proposed technique, global gradient estimates of solutions to double obstacle problems \eqref{eq:DOP} are preserved under fractional maximal operators. We also turn the reader's attention to \cite{PN_dist} for another viewpoint of this approach, that will take advantage of \emph{fractional maximal distribution functions} in our efforts.

Let us now state our main results via the following theorems. In this work, we always assume $\Omega$ the Reifenberg flat domain (such that $\partial\Omega$ is sufficiently flat) and the nonlinearity $\mathcal{A}$ further has small BMO seminorm. For the sake of brevity, with given $\delta, r_0>0$, the assumption on $\Omega$ - a $(r_0,\delta)$-Reifenberg flat domain  together with operator $\mathcal{A}$ satisfying $[\mathcal{A}]^{r_0} \le \delta$, which will henceforth be called \emph{the assumption $(r_0,\delta)-(\mathbb{H})$} throughout the paper. The precise description of these assumptions  will be given in Section \ref{sec:pre}. Under the given assumptions on the operator $\mathcal{A}$ and $\mathcal{B}$ in~\eqref{eq:A1-DOP}-\eqref{eq:A2-DOP}, for the shortness of notation, let us use the word ``\texttt{data}'' is the set of parameters $\texttt{data} = \left(n,p,\Lambda,\gamma,\mathrm{diam}(\Omega)\right)$, in order to illustrate the dependence on given data of the problem considered.

\begin{taggedtheorem}{A}[Level-set inequality of measuring sets] 
\label{theo:main-A}
For every $\theta \in (\frac{1}{\gamma},1)$, one can find some constants $\varepsilon_0 = \varepsilon_0(\texttt{data},\theta) \in (0,1)$, $\delta = \delta(\texttt{data},\theta,\varepsilon)>0$ and $\beta = \beta(\texttt{data},\theta,\varepsilon) > 0$ such that if problem~\eqref{eq:DOP} satisfies the assumption $(r_0,\delta)-(\mathbb{H})$ for some $r_0>0$, then the following estimate
\begin{align}\label{eq:mainlambda-P} 
& \mathcal{L}^n(\{{\mathbf{M}}(\mathbb{E}(u))>\varepsilon^{-\theta}\lambda, {\mathbf{M}}(|\mathcal{F}|^p) \le \beta \lambda\}) \leq C \varepsilon \mathcal{L}^n(\{ {\mathbf{M}}(\mathbb{E}(u))> \lambda\}),
\end{align}
holds for any $\varepsilon \in (0,\varepsilon_0)$ and $\lambda>0$, where $C = C(\texttt{data}, \theta, \mathrm{diam}(\Omega)/r_0)>0$.
\end{taggedtheorem}

Here, for simplicity, we use the notation $\mathcal{L}^n(E)$ for Lebesgue measure of $E \subset \mathbb{R}^n$ and as a minor abuse of notation in Theorem \ref{theo:main-A} and what follows, we write $\{ |g| \ge \lambda\}$ instead of $\{x \in \Omega: |g(x)| > \lambda \}$. As a direct outcome of Theorem \ref{theo:main-A}, Theorem \ref{theo:main-norm} establishes the gradient estimates of solutions to our problem \eqref{eq:DOP}.

\begin{taggedtheorem}{B}[Global Lorentz regularity]
\label{theo:main-norm}
For every $0< q < \gamma$ and $0<s \le \infty$, there exists $\delta = \delta(\texttt{data},q,s)>0$ such that if problem~\eqref{eq:DOP} satisfies the assumption $(r_0,\delta)-(\mathbb{H})$ for some $r_0>0$ then
$$|\mathcal{F}|^p \in L^{q,s}(\Omega) \ \Longrightarrow \ \mathbb{E}(u) \in L^{q,s}(\Omega).$$ 
More precisely, there exists a positive constant $C = C(\texttt{data}, q, s)$ such that 
\begin{align}\label{eq:main-A}
\|\mathbb{E}(u)\|_{L^{q,s}(\Omega)} & \le  C  \||\mathcal{F}|^p\|_{L^{q,s}(\Omega)}.
\end{align}
\end{taggedtheorem}

Even though in Theorem \ref{theo:main-norm}, we state the norm estimates for gradient of weak solutions to our problem in the setting of Lorentz spaces, we nevertheless remark that the arguments can still be refined with fractional maximal operator $\mathbf{M}_\alpha$, to achieve a more general result. Let us state such result in the following Theorem \ref{theo:main-M-alpha}.

\begin{taggedtheorem}{C}[Lorentz regularity via fractional maximal operators]
\label{theo:main-M-alpha}
For every 
$$0 \le \alpha < \frac{n}{\gamma}, \quad 0< q < \frac{n\gamma}{n-\alpha\gamma}, \ \mbox{ and } \ 0<s \le \infty,$$ 
there exists $\delta = \delta(\texttt{data},\alpha,q,s)>0$ such that if problem~\eqref{eq:DOP} satisfies the assumption $(r_0,\delta)-(\mathbb{H})$ for some $r_0>0$ then
$$\mathbf{M}_{\alpha}(|\mathcal{F}|^p) \in L^{q,s}(\Omega) \ \Longrightarrow \ \mathbf{M}_{\alpha}(\mathbb{E}(u)) \in L^{q,s}(\Omega).$$ 
More precisely, there exists a positive constant $C = C(\texttt{data}, \alpha, q, s)$ such that 
\begin{align}\label{eq:main-C}
\|\mathbf{M}_{\alpha}(\mathbb{E}(u))\|_{L^{q,s}(\Omega)} & \le  C  \|\mathbf{M}_{\alpha}(|\mathcal{F}|^p)\|_{L^{q,s}(\Omega)}.
\end{align}
\end{taggedtheorem}

The organization of the paper contents is resumed as follows. Section \ref{sec:pre} is devoted to the notation, definitions and a few preliminary results that will be needed to prove our main results of the paper. Next in section \ref{sec:comparison} we establish a series of comparison estimates between the solutions $u$ to \eqref{eq:DOP} and some suitable homogeneous problems. The step of proving these comparisons is the key ingredient and most important to obtain our regularity results. Finally, by following the idea of `good-$\lambda$' technique, in the last section we are able to prove our main theorems, Theorem \ref{theo:main-A} and \ref{theo:main-norm}.

\section{Preliminaries}\label{sec:pre}

Let us in this section provide some preliminaries and prove preparatory results that will be used in the proofs of main theorems. 

We first introduce much of the notation and some basic definitions that will be used in the whole paper. In what follows, $\Omega \subset \mathbb{R}^n$ is always assumed to be an open bounded domain, and an open ball centered at $x_0$ with radius $\varrho>0$ in $\mathbb{R}^n$ is denoted by $B_{\varrho}(x_0)$. Moreover, the integral mean value of a function $g \in L_{loc}^1(\Omega)$ over a set ${B} \subset \mathbb{R}^n$ will defined by
\begin{align*}
\fint_{B}{g(x)dx} = \frac{1}{\mathcal{L}^n({B})}{\int_{B}{g(x)dx}}.
\end{align*}
In the entirety of the paper we will use $\mathrm{diam}(E)$ to mention the diameter of a set $E \subset \Omega$. 

By the symbol $C$, we denote a universal constant (larger than or equal to 1) whose exact value is unimportant and may vary from line to line. The dependencies of $C$ on some prescribed parameters will be highlighted between parentheses, if needed. For instance, when writing $C(\texttt{data})$ we mean that constant $C$ depends only on the given \texttt{data}.

\begin{definition}[$(r_0,\delta)$-Reifenberg flat domain]\label{def:Reifenberg}
For $0 < \delta < 1$ and $r_0>0$, we say that $\Omega$ is a $(r_0,\delta)$-Reifenberg flat domain if for each $\xi \in \partial \Omega$ and each $\varrho \in (0,r_0]$, it is possible to find a coordinate system $\{y_1,y_2,...,y_n\}$ with origin at $\xi$ such that
\begin{align*}
B_{\varrho}(\xi) \cap \{y_n > \delta \varrho\} \subset B_{\varrho}(\xi) \cap \Omega \subset B_{\varrho}(\xi) \cap \{y_n > -\delta \varrho\}.
\end{align*}
\end{definition}

\begin{definition}[$(r_0,\delta)$-BMO condition]\label{def:BMOcond}
Let $\delta >0$ and $r_0>0$, the operator $\mathcal{A}$ is said that satisfying a $(r_0,\delta)-\mathrm{BMO}$ condition if
\begin{align}\label{cond:BMO}
[\mathcal{A}]^{r_0} = \sup_{y \in \mathbb{R}^n, \ 0<\varrho\le r_0} \left(\fint_{B_{\varrho}(y)} \left(\sup_{\mu \in \mathbb{R}^n \setminus \{0\}} \frac{|\mathcal{A}(x,\mu) - \overline{\mathcal{A}}_{B_{\varrho}(y)}(\mu)|}{|\mu|^{p-1}}\right) dx\right) \le \delta,
\end{align}
where $\overline{\mathcal{A}}_{B_{\varrho}(y)}(\mu)$ denotes the average of $\mathcal{A}(\cdot,\mu)$ over the ball $B_{\varrho}(y)$.
\end{definition}

\begin{definition}[Reverse H{\"o}lder class]
Let $\mathbb{V} \in L^1_{\mathrm{loc}}(\mathbb{R}^n)$  be a non-negative function. We say that potential $\mathbb{V}$ belongs to reverse H{\"o}lder class $\mathcal{RH}^{\gamma}$ for some $1<\gamma \le \infty$ if there exists $C = C(n,\gamma)>0$ such that
\begin{align*}
\begin{cases}
\displaystyle{\left(\fint_{B_{\varrho}(x)} \mathbb{V}(\xi)^{\gamma}d\xi\right)^{\frac{1}{\gamma}}} &\le C \displaystyle{\fint_{B_{\varrho}(x)} \mathbb{V}(\xi)d\xi}, \quad \text{if} \ \ \gamma \in (1,\infty),\\[5pt]
\qquad \displaystyle{\|\mathbb{V}\|_{L^\infty(B_{\varrho}(x))}} &\le C \displaystyle{\fint_{B_{\varrho}(x)}{\mathbb{V}(\xi) d\xi}}, \quad \text{if} \ \ \gamma=\infty,
\end{cases}
\end{align*}
holds for every $x \in \mathbb{R}^n$ and $\varrho>0$.
\end{definition}

\begin{remark}
In some particular cases of considerable physical context, the potential $\mathbb{V}$ has its own significance. In many partial differential equations arising in engineering and physics, there are many discussions on such potential $\mathbb{V}$ such as octic potential, decatic potential, polynomial potential, etc. Here for typical examples of $\mathbb{V}$ in reverse H\"older class, we particular have $\mathbb{V}(x) = |x|^{-\alpha} \in \mathcal{RH}^\gamma$ for $\alpha<\frac{n}{\gamma}$ or when $\gamma=\infty$, positive polynomials belong to $\mathcal{RH}^\gamma$.
\end{remark}
\begin{remark}
If $\mathbb{V} \in \mathcal{RH}^\gamma$, then $\mathbb{V} \in \mathcal{RH}^{\gamma+\varepsilon}$ for some $\varepsilon>0$; and for all $0<\gamma<\infty$, $\mathcal{RH}^\gamma \subset \mathcal{RH}^\infty$.
\end{remark}
\begin{remark}
According to the Calder\'on-Zygmund type estimates proved in \cite[Theorem 2.3]{LO_schro}, one further suitable assumption on the potential $\mathbb{V}$ will be added to our problem, that related to the Morrey norm of $\mathbb{V}$:
\begin{align*}
\|\mathbb{V}\|_{L^{\gamma; p\gamma}(\Omega)} = \sup_{0<\varrho< \mathrm{diam}(\Omega); \, \xi \in \Omega}{\varrho^{p-\frac{n}{\gamma}}}\|\mathbb{V}\|_{L^{\gamma}(B_{\varrho}(\xi)\cap\Omega)}  \le 1.
\end{align*}
Based on the analysis performed in Lebesgue setting by Lee and Ok in \cite{LO_schro}, this appropriate condition was added in their proof due to the Reifenberg flatness assumed on domain $\Omega$ and in order to complete regularity results in the setting of Lorentz, this condition is unavoidable to assume. 
\end{remark}

As mentioned above, before passing to our main regularity results in the framework of Lorentz space, let us include here its definition. Lorentz space is one of important generalizations of the weak and classical $L^q$ spaces, that is affected by two scale parameters $q$ and $s$. We briefly recall it as follows. 
\begin{definition}[Lorentz spaces]
Let $0<q<\infty$ and $0<s\le \infty$, the Lorentz space, adopted with notation $L^{q,s}(\Omega)$, is the set all of function $g \in L^1_{\mathrm{loc}}(\Omega)$ such that $\|g\|_{L^{q,s}(\Omega)}$ is finite, where
\begin{align*}
\|g\|_{L^{q,s}(\Omega)} := \begin{cases} \displaystyle{\left[ q \int_0^\infty{ \lambda^s \mathcal{L}^n(\{\xi \in \Omega: |g(\xi)|>\lambda\} )^{\frac{s}{q}} \frac{d\lambda}{\lambda}} \right]^{\frac{1}{s}}}, & \ \mbox{ if } \ s < \infty,\\[8pt]
 \displaystyle{\sup_{\lambda>0}{\lambda \mathcal{L}^n(\{\xi \in \Omega:|g(\xi)|>\lambda\})^{\frac{1}{q}}}}, & \ \mbox{ if } \ s = \infty.\end{cases}
\end{align*}
\end{definition}
Next, we recall in the following the definition of fractional maximal operators and \textit{Hardy-Littlewood maximal operator}.
\begin{definition}[Maximal operators]\label{def:Malpha}
For $\alpha \in [0, n]$, the fractional maximal operator $\mathbf{M}_{\alpha}$ of a mapping $g \in L^1_{\mathrm{loc}}(\mathbb{R}^n)$ is defined by:
\begin{align} \nonumber
\mathbf{M}_\alpha g(\xi) = \sup_{\rho>0}{{\rho}^{\alpha} \fint_{B_{\rho}(\xi)}{|g(x)|dx}}, 
\end{align}
for every $\xi \in \mathbb{R}^n$. In particular, $\mathbf{M}_0$ is not different to the well-known Hardy-Littlewood maximal operator $\mathbf{M}$ given by:
\begin{align}\nonumber 
\mathbf{M}g(\xi) = \sup_{\rho >0}{\fint_{B_{\rho}(\xi)}|g(x)|dx}, \quad \xi \in \mathbb{R}^n, \ g \in L^1_{\mathrm{loc}}(\mathbb{R}^n).
\end{align}
\end{definition}
Further, from the above definitions of $\mathbf{M}$ and $\mathbf{M}_\alpha$, we have a nice property on the boundedness of fractional maximal function, provided in the next lemma.
\begin{lemma}[See~\cite{PNnonuniform}]\label{lem:bound-M}
Let $s \in [1,\infty)$ and $\alpha \in \left[0,\frac{n}{s}\right)$, there is $C>0$ such that 
\begin{align*}
\mathcal{L}^n\left(\left\{x \in \mathbb{R}^n: \ \mathbf{M}_{\alpha}g(x)> t \right\}\right) \le C \left(\frac{1}{t^{s}}\int_{\mathbb{R}^n}|g(x)|^s dx\right)^{\frac{n}{n-\alpha s}},
\end{align*}
for all $t >0$ and $g \in L^s(\mathbb{R}^n)$.
\end{lemma}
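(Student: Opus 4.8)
\textbf{Proof proposal for Lemma \ref{lem:bound-M}.}

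The plan is to prove the stated weak-type bound for the fractional maximal operator $\mathbf{M}_\alpha$ by the standard Vitali-covering argument, reducing to the known weak $(1,1)$ bound for the Hardy--Littlewood maximal operator $\mathbf{M}$. First I would fix $t>0$ and $g \in L^s(\mathbb{R}^n)$, and set $E_t = \{x \in \mathbb{R}^n : \mathbf{M}_\alpha g(x) > t\}$. For each $x \in E_t$ there is a radius $\rho_x>0$ with $\rho_x^\alpha \fint_{B_{\rho_x}(x)} |g|\,dy > t$, which rearranges to $\rho_x^{n-\alpha} < C_n t^{-1} \int_{B_{\rho_x}(x)} |g|\,dy \le C_n t^{-1}\|g\|_{L^1}$; in particular the radii are uniformly bounded, so a Vitali-type covering lemma produces a countable, pairwise-disjoint subfamily $\{B_{\rho_i}(x_i)\}$ of $\{B_{\rho_x}(x)\}$ with $E_t \subset \bigcup_i B_{5\rho_i}(x_i)$.

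Next I would estimate the measure. From $\mathcal{L}^n(B_{5\rho_i}(x_i)) = 5^n \mathcal{L}^n(B_{\rho_i}(x_i)) = c_n (5\rho_i)^n$ together with the selection inequality $\rho_i^{n-\alpha} \le C_n t^{-1} \int_{B_{\rho_i}(x_i)} |g|\,dy$, one gets
\begin{align*}
\mathcal{L}^n(E_t) \le \sum_i c_n 5^n \rho_i^n = c_n 5^n \sum_i \rho_i^\alpha \cdot \rho_i^{n-\alpha} \le C t^{-1} \sum_i \rho_i^\alpha \int_{B_{\rho_i}(x_i)} |g|\,dy.
\end{align*}
The exponent bookkeeping then needs to match $\frac{n}{n-\alpha s}$: I would apply Hölder's inequality on each ball, $\int_{B_{\rho_i}(x_i)}|g|\,dy \le \|g\|_{L^s(B_{\rho_i}(x_i))} (c_n\rho_i^n)^{1/s'}$, and, after raising the chain of inequalities to the power $\frac{n}{n-\alpha s}$ and using disjointness of the $B_{\rho_i}(x_i)$ to sum the $L^s$-norms (via $\|\cdot\|_{\ell^{n/(n-\alpha s)}} \le \|\cdot\|_{\ell^1}$ since $\frac{n}{n-\alpha s}\ge 1$), collect the powers of $\rho_i$ and $t$. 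Tracking that the surviving power of $\rho_i$ cancels exactly when $\alpha < n/s$ yields $\mathcal{L}^n(E_t) \le C \bigl( t^{-s} \int_{\mathbb{R}^n} |g|^s\,dx \bigr)^{n/(n-\alpha s)}$, which is the claim.

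The main obstacle I anticipate is the exponent arithmetic: one must verify that after Hölder and the summation step the residual powers of $\rho_i$ genuinely cancel, which is precisely where the hypothesis $\alpha \in [0, n/s)$ enters and guarantees $n - \alpha s > 0$ so that the exponent $\frac{n}{n-\alpha s}$ is finite and $\ge 1$. A cleaner alternative I would keep in mind is the pointwise domination $\mathbf{M}_\alpha g(x) \le C \bigl(\mathbf{M}(|g|^s)(x)\bigr)^{1/s} \cdot \bigl(\sup_\rho \rho^{\alpha}\rho^{n/s'}\rho^{-n/s'}\bigr)$-type interpolation argument — more precisely the standard estimate $\mathbf{M}_\alpha g \le C \|g\|_{L^s}^{\theta}(\mathbf{M}(|g|^s))^{(1-\theta)/s}$ for a suitable $\theta$ obtained by splitting the supremum over small and large $\rho$ — which reduces the problem directly to the weak $(1,1)$ bound for $\mathbf{M}$ applied to $|g|^s$ and avoids the Vitali covering entirely; I would present whichever of the two is shorter, but the covering argument above is self-contained and robust. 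Finally I would note the $s=1$ case recovers the classical fractional maximal weak-type inequality and requires no separate treatment.
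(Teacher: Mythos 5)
The paper does not prove this lemma at all: it is quoted with a citation to \cite{PNnonuniform}, so there is no internal proof to compare against. Your covering argument is the standard one and is essentially correct; the per-ball bookkeeping does close, since the selection inequality plus H\"older on each chosen ball gives $\rho_i^{(n-\alpha s)/s}\le C t^{-1}\bigl(\int_{B_{\rho_i}(x_i)}|g|^s\,dy\bigr)^{1/s}$, hence $\rho_i^n\le\bigl(Ct^{-s}\int_{B_{\rho_i}(x_i)}|g|^s\,dy\bigr)^{n/(n-\alpha s)}$, and summing with disjointness and $\|\cdot\|_{\ell^{1}}$-domination of $\ell^{n/(n-\alpha s)}$ yields exactly the stated estimate, with $\alpha<n/s$ guaranteeing $n-\alpha s>0$. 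The one slip is your justification that the radii are uniformly bounded: you bound $\int_{B_{\rho_x}(x)}|g|\,dy$ by $\|g\|_{L^1}$, which may be infinite since $g$ is only assumed to lie in $L^s$ with $s>1$ allowed; replace this by the H\"older bound $\int_{B_{\rho_x}(x)}|g|\,dy\le \|g\|_{L^s(\mathbb{R}^n)}(c_n\rho_x^n)^{1/s'}$, which gives $\rho_x^{\,n/s-\alpha}\le C\|g\|_{L^s}/t$ and hence a finite uniform bound on the radii (again using $\alpha<n/s$), after which the Vitali $5r$-covering lemma applies as you intend. Your alternative Hedberg-type route, $\mathbf{M}_\alpha g\le C\|g\|_{L^s}^{\alpha s/n}\bigl(\mathbf{M}(|g|^s)\bigr)^{(1-\alpha s/n)/s}$ combined with the weak $(1,1)$ bound for $\mathbf{M}$ applied to $|g|^s$, is also valid and arguably shorter; either version would serve as a complete proof of the lemma that the paper itself only cites.
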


\section{Comparison results for double obstacle problems}
\label{sec:comparison}

We first introduce a function $\mathbf{\Phi}: \ \mathbb{R}^n \times \mathbb{R}^n \to [0,\infty)$ defined by
\begin{align}\label{def:func-Phi}
\mathbf{\Phi}(x, z) : = \left(|x|^2 + |z|^2 \right)^{\frac{p-2}{2}}|x - z|^2, \quad x, \, z \in \mathbb{R}^n.
\end{align} 
In fact, this function is considered to simplify the condition of $\mathcal{A}$ in~\eqref{eq:A2-DOP} as follows
\begin{align*}
\langle \mathcal{A}(x,\mu_1)-\mathcal{A}(x,\mu_2), \mu_1 - \mu_2 \rangle \ge \Lambda^{-1} \mathbf{\Phi}(\mu_1,\mu_2),
\end{align*}
for almost every $x$ in $\Omega$ and every $\mu$, $\mu_1$, $\mu_2 \in \mathbb{R}^n \setminus \{0\}$. Let us begin this section with a technical lemma, that will be stated and proved hereafter. 
\begin{lemma}\label{lem:Phi(v,v)}
Let $B \subset \mathbb{R}^n$ be an open bounded set and two given mappings $g_1$, $g_2 \in W^{1,p}(B)$ with $p>1$. Then for each $\varepsilon \in (0,1)$, one can find $C>0$ depending on $\varepsilon$ such that
\begin{align}\label{eq:lem-Phi(u,v)}
\fint_{B} \mathbb{E}(g_1 - g_2) dx \le \varepsilon  \fint_{B} \mathbb{E}(g_1) dx + C \fint_{B} \mathbf{\Phi}(\nabla g_1, \nabla g_2) + \mathbb{V}\mathbf{\Phi}(g_1, g_2) dx.
\end{align}
\end{lemma}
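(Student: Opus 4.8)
The plan is to decompose $\mathbb{E}(g_1-g_2) = |\nabla g_1 - \nabla g_2|^p + \mathbb{V}|g_1-g_2|^p$ and estimate each of the two pieces separately, against the corresponding piece of $\mathbf{\Phi}$: namely $|\nabla g_1-\nabla g_2|^p$ against $\mathbf{\Phi}(\nabla g_1,\nabla g_2)$, and $\mathbb{V}|g_1-g_2|^p$ against $\mathbb{V}\,\mathbf{\Phi}(g_1,g_2)$. The crux is a pointwise inequality comparing $|x-z|^p$ with $\mathbf{\Phi}(x,z) = (|x|^2+|z|^2)^{\frac{p-2}{2}}|x-z|^2$ for vectors $x,z \in \mathbb{R}^n$. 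Since this inequality behaves differently according to whether $p \ge 2$ or $1 < p < 2$, I would split into those two cases.

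First, the case $p \ge 2$. Here the elementary bound $(|x|^2+|z|^2)^{\frac{p-2}{2}} \ge c_p(|x-z|^2)^{\frac{p-2}{2}}$ — using $|x-z| \le |x|+|z| \lesssim (|x|^2+|z|^2)^{1/2}$ — gives directly $\mathbf{\Phi}(x,z) \gtrsim |x-z|^p$, so in fact $|x-z|^p \le C\,\mathbf{\Phi}(x,z)$ pointwise with no $\varepsilon$ needed; integrating this applied to $(\nabla g_1, \nabla g_2)$ and to $(g_1,g_2)$ (the latter multiplied by $\mathbb{V} \ge 0$ and integrated in $x$) already yields \eqref{eq:lem-Phi(u,v)} without the first term on the right. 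Second, and this is the main obstacle, the case $1 < p < 2$. Now $(|x|^2+|z|^2)^{\frac{p-2}{2}}$ is small when $|x|,|z|$ are large, so $\mathbf{\Phi}(x,z)$ can be much smaller than $|x-z|^p$ and a clean pointwise bound fails; this is exactly why the $\varepsilon\fint_B \mathbb{E}(g_1)$ term is present. The remedy is Young's inequality: write
\begin{align*}
|x-z|^p = \left[(|x|^2+|z|^2)^{\frac{p-2}{2}}|x-z|^2\right]^{\frac{p}{2}} (|x|^2+|z|^2)^{\frac{(2-p)p}{4}} = \mathbf{\Phi}(x,z)^{\frac{p}{2}}\,(|x|^2+|z|^2)^{\frac{(2-p)p}{4}},
\end{align*}
and apply Young with exponents $\frac{2}{p}$ and $\frac{2}{2-p}$ to get, for any $\varepsilon>0$,
\begin{align*}
|x-z|^p \le \varepsilon\,(|x|^2+|z|^2)^{\frac{p}{2}} + C(\varepsilon)\,\mathbf{\Phi}(x,z) \le \varepsilon\,C_p\bigl(|x|^p+|z|^p\bigr) + C(\varepsilon)\,\mathbf{\Phi}(x,z).
\end{align*}

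With this pointwise inequality in hand, the remaining steps are routine. Apply it with $(x,z)=(\nabla g_1,\nabla g_2)$ and integrate over $B$ (normalized): the $\mathbf{\Phi}$ term contributes to the last term of \eqref{eq:lem-Phi(u,v)}; the $\varepsilon$ term gives $\varepsilon C_p \fint_B (|\nabla g_1|^p + |\nabla g_2|^p)$. The genuinely delicate point is that the statement only allows $\varepsilon \fint_B \mathbb{E}(g_1)\,dx$ on the right — that is, only $g_1$, not $g_2$. To absorb $|\nabla g_2|^p$ and $\mathbb{V}|g_2|^p$, I would bound $|\nabla g_2| \le |\nabla g_1| + |\nabla g_1 - \nabla g_2|$, hence $|\nabla g_2|^p \le 2^{p-1}(|\nabla g_1|^p + |\nabla g_1-\nabla g_2|^p)$, and similarly for the potential term with $\mathbb{V}\ge 0$; the extra $|\nabla g_1-\nabla g_2|^p$ (resp. $\mathbb{V}|g_1-g_2|^p$) that appears is a fraction of the left-hand side $\fint_B \mathbb{E}(g_1-g_2)\,dx$ times $\varepsilon$, so choosing $\varepsilon$ small enough lets it be absorbed into the left side, leaving a clean $\varepsilon \fint_B \mathbb{E}(g_1)$ (after renaming $\varepsilon$). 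Running the identical argument for the potential piece — apply the pointwise inequality to $(g_1,g_2)$, multiply by $\mathbb{V}$, integrate, and note $\mathbb{V}\,\mathbf{\Phi}(g_1,g_2)$ is exactly the second term under the last integral — and combining with the (trivial) gradient bound when $p\ge2$, then adding the two pieces, completes the proof. I expect the absorption bookkeeping and the asymmetry between $g_1$ and $g_2$ to be the only subtle point; everything else is Young's inequality and elementary vector estimates.
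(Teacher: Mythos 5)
Your proposal is correct and follows essentially the same route as the paper: the $p\ge 2$ case is immediate, and for $1<p<2$ both arguments use the factorization $|x-z|^p=\mathbf{\Phi}(x,z)^{p/2}(|x|^2+|z|^2)^{(2-p)p/4}$ together with Young's inequality and absorption of the $|g_1-g_2|$ contributions into the left-hand side (the paper replaces $|g_2|$ by $|g_1|+|g_1-g_2|$ before applying H\"older/Young at the integral level, you do it pointwise afterwards — an immaterial reordering). No gaps.
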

\begin{proof}
Inequality~\eqref{eq:lem-Phi(u,v)} is obviously when $p \ge 2$.  Otherwise, if $ p \in (1, 2)$, we first decompose 
\begin{align*}
\mathbb{V}|g_1 - g_2|^p & \le 2^{\frac{p(2-p)}{4}}\left[\left(\mathbb{V}|g_1|^p + \mathbb{V}|g_1-g_2|^p\right)\right]^{1-\frac{p}{2}} [\mathbb{V}\mathbf{\Phi}(g_1, g_2)]^{\frac{p}{2}},
\end{align*}
and then apply H{\"o}lder and Young inequalities for any $\epsilon \in (0,1/2)$, to get that
\begin{align}\nonumber
\fint_{B} \mathbb{V}|g_1 - g_2|^p dx & \le \epsilon  \fint_{B} \left(\mathbb{V}|g_1|^p + \mathbb{V}|g_1-g_2|^p\right) dx + 2\epsilon^{1 - \frac{2}{p}} \fint_{B} \mathbb{V} \mathbf{\Phi}(g_1, g_2) dx.
\end{align}
Using the same method for the gradient term and setting $\varepsilon = 2\epsilon$, it allows us to conclude~\eqref{eq:lem-Phi(u,v)}.
\end{proof}

This section is dedicated to state and prove some comparison estimates between weak solutions of our double obstacle problem \eqref{eq:DOP} and some homogeneous equations, via the following important lemmas. 

\begin{lemma}\label{lem:weak-max}
Suppose that $w_1, w_2 \in W^{1,p}(\Omega)$ for some $p>1$ satisfying $(w_1 - w_2)^+ \in W_0^{1,p}(\Omega)$, $w_1 \le w_2$ on $\partial \Omega$ and the following variational inequality
\begin{align}\nonumber
\int_{\Omega}  \left\langle\mathcal{A}(x,\nabla w_1), \nabla \varphi\right\rangle dx & + {  \int_{\Omega} \langle \mathbb{V}|w_1|^{p-2}w_1, \varphi \rangle dx} \\ \label{est:weak}
&  \le \int_{\Omega} \left\langle\mathcal{A}(x,\nabla w_2), \nabla \varphi\right\rangle dx + {  \int_{\Omega} \langle \mathbb{V}|w_2|^{p-2}w_2, \varphi \rangle dx},  
\end{align}
holds for all non-negative $\varphi \in W_0^{1,p}(\Omega)$. Then $w_1 \le w_2$ almost everywhere in $\Omega$.
\end{lemma}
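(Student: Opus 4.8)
The plan is to prove this weak comparison (maximum) principle by testing the variational inequality \eqref{est:weak} with the natural test function $\varphi = (w_1 - w_2)^+$, which is admissible by hypothesis since $(w_1-w_2)^+ \in W_0^{1,p}(\Omega)$ and is non-negative. Writing $\Omega^+ = \{x \in \Omega : w_1(x) > w_2(x)\}$, on this set $\nabla \varphi = \nabla w_1 - \nabla w_2$ and $\varphi = w_1 - w_2$, while $\varphi \equiv 0$ elsewhere. Rearranging \eqref{est:weak} then yields
\begin{align*}
\int_{\Omega^+} \langle \mathcal{A}(x,\nabla w_1) - \mathcal{A}(x,\nabla w_2), \nabla w_1 - \nabla w_2 \rangle\, dx + \int_{\Omega^+} \langle \mathbb{V}(|w_1|^{p-2}w_1 - |w_2|^{p-2}w_2), w_1 - w_2 \rangle\, dx \le 0.
\end{align*}

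The key point is that both integrands are non-negative on $\Omega^+$. For the first term, this is exactly the monotonicity/ellipticity assumption \eqref{eq:A2-DOP}, which gives
\[
\langle \mathcal{A}(x,\nabla w_1) - \mathcal{A}(x,\nabla w_2), \nabla w_1 - \nabla w_2 \rangle \ge \Lambda^{-1}\mathbf{\Phi}(\nabla w_1, \nabla w_2) \ge 0
\]
(with the obvious care at points where one of the gradients vanishes, where the inequality still holds by continuity of the operator or by a direct limiting argument). For the second term, I would invoke the standard elementary monotonicity of the map $t \mapsto |t|^{p-2}t$ on $\mathbb{R}$, namely $(|a|^{p-2}a - |b|^{p-2}b)(a-b) \ge 0$ for all $a,b \in \mathbb{R}$, together with $\mathbb{V} \ge 0$; hence the second integrand is non-negative pointwise on $\Omega^+$.

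Combining these, the sum of two non-negative integrals is $\le 0$, forcing each to vanish. In particular $\int_{\Omega^+} \mathbf{\Phi}(\nabla w_1, \nabla w_2)\,dx = 0$, so $\nabla w_1 = \nabla w_2$ a.e. on $\Omega^+$, i.e. $\nabla (w_1 - w_2)^+ = 0$ a.e. in $\Omega$. Since $(w_1 - w_2)^+ \in W_0^{1,p}(\Omega)$ and has vanishing gradient, the Poincar\'e inequality (or connectedness of $\Omega$ plus the zero trace) forces $(w_1 - w_2)^+ \equiv 0$, which is precisely $w_1 \le w_2$ a.e. in $\Omega$.

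The main obstacle is mostly a matter of care rather than depth: first, justifying that $\varphi = (w_1-w_2)^+$ is a legitimate test function and correctly identifying its gradient (Stampacchia's lemma on derivatives of truncations), and second, handling the ellipticity bound \eqref{eq:A2-DOP} at points where $\nabla w_1$ or $\nabla w_2$ is zero, since \eqref{eq:A2-DOP} is stated for $\mu_1,\mu_2 \in \mathbb{R}^n\setminus\{0\}$; this is a null-set or continuity technicality that does not affect the conclusion. If $\Omega$ is not assumed connected one simply argues component by component, the zero boundary trace on each component giving the same conclusion.
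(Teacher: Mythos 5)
Your proof is correct, and its core is the same as the paper's: both test \eqref{est:weak} with $\varphi=(w_1-w_2)^+$ and invoke the monotonicity \eqref{eq:A2-DOP} on the set $\{w_1>w_2\}$. Where you diverge is in the endgame. The paper also quantifies the monotonicity of $t\mapsto |t|^{p-2}t$, so that the potential term contributes $\mathbb{V}\mathbf{\Phi}(w_1,w_2)$ to the left-hand side; it then feeds the resulting inequality $\int_{\Omega'}\bigl(\mathbf{\Phi}(\nabla w_1,\nabla w_2)+\mathbb{V}\mathbf{\Phi}(w_1,w_2)\bigr)dx\le 0$ into Lemma~\ref{lem:Phi(v,v)}, obtaining $\int_\Omega \mathbb{E}((w_1-w_2)^+)\,dx\le \varepsilon\int_{\Omega'}\mathbb{E}(w_1)\,dx$ for every $\varepsilon$, and lets $\varepsilon\to 0$ before using the zero boundary values. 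You instead use only the sign of the potential term ($\mathbb{V}\ge 0$ and the elementary monotonicity $(|a|^{p-2}a-|b|^{p-2}b)(a-b)\ge 0$), deduce directly that $\int_{\Omega^+}\mathbf{\Phi}(\nabla w_1,\nabla w_2)\,dx=0$, hence $\nabla(w_1-w_2)^+=0$ a.e., and finish with Poincar\'e in $W_0^{1,p}(\Omega)$. Your route is more elementary: it bypasses Lemma~\ref{lem:Phi(v,v)} (and its case distinction for $1<p<2$) and the $\varepsilon$-limit, at no cost, since $\mathbf{\Phi}(x,z)=0$ forces $x=z$ for every $p>1$; the paper's route has the cosmetic advantage of reusing machinery ($\mathbb{E}$, Lemma~\ref{lem:Phi(v,v)}) that recurs in the comparison estimates of Section 3. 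The technicality you flag about \eqref{eq:A2-DOP} being stated for nonzero arguments is present in the paper's proof as well and is indeed harmless.
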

\begin{proof}
By choosing $\varphi = (w_1 - w_2)^+ \in W_0^{1,p}(\Omega)$ in~\eqref{est:weak}, one has
\begin{align*}
\int_{\Omega} & \left\langle\mathcal{A}(x,\nabla w_1)-\mathcal{A}(x,\nabla w_2), \nabla ((w_1 - w_2)^+)\right\rangle dx \\
& \hspace{3cm} + {  \int_{\Omega} \langle \mathbb{V}|w_1|^{p-2}w_1- \mathbb{V}|w_2|^{p-2}w_2, ((w_1 - w_2)^+) \rangle dx} \le 0.
\end{align*} 
With notation $\Omega' = \Omega \cap \{\ w_1 \ge w_2\}$, it implies from~\eqref{eq:A2-DOP} that
\begin{align*}
\int_{\Omega'} \mathbf{\Phi}(\nabla w_1, \nabla w_2)  +  \mathbb{V} \mathbf{\Phi}(w_1, w_2) dx \le 0,
\end{align*}
which gives us the following estimate from  Lemma~\ref{lem:Phi(v,v)} for every $\varepsilon>0$  that
\begin{align*}
\int_{\Omega} \mathbb{E} ((w_1 - w_2)^+) dx = \int_{\Omega'} \mathbb{E}(w_1 -w_2) dx  \le \varepsilon \int_{\Omega'} \mathbb{E}(w_1) dx.
\end{align*}
By sending $\varepsilon$ to $0$, one concludes that $w_1 \le w_2$ a.e. in $\Omega$ since $w_1 \le w_2$ a.e. on $\partial \Omega$.
\end{proof}

\begin{lemma}\label{lem:A1}
Let $u \in \mathbb{K}$ be a weak solution to~\eqref{eq:DOP} and $B$ be an open ball in $\Omega$. Assume that $v_1 \in u + W_0^{1,p}(B)$ and $v_1 \ge \psi_1$ a.e. in $B$, is the unique solution to the following obstacle problem
\begin{align}\nonumber
\int_{B} & \langle \mathcal{A}(x,\nabla v_1), \nabla v_1 - \nabla \varphi \rangle dx  + {  \int_{B} \langle \mathbb{V}|v_1|^{p-2}v_1, v_1 - \varphi\rangle dx} \\ \label{eq:DOP-1}
& \hspace{2cm} \le \int_{B} \langle \mathcal{A}(x,\nabla \psi_2), \nabla v_1 - \nabla \varphi \rangle dx + {  \int_{B} \langle \mathbb{V}|\psi_2|^{p-2}\psi_2, v_1 - \varphi\rangle dx} ,
\end{align}
for all $\varphi \in u + W_0^{1,p}(B)$ and $\varphi \ge \psi_1$ a.e in $B$. Then one can find $C>0$ such that
\begin{align}\label{est:DOP-2}
\int_{B} \mathbb{E}(v_1) dx \le C \int_{B} \left(\mathbb{E}(u) + \mathbb{E}(\psi_2)\right) dx.
\end{align}
Moreover, $v_1 \le \psi_2$ a.e in $B$.
\end{lemma}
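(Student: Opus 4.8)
The plan is to prove the two assertions — the energy bound \eqref{est:DOP-2} and the one-sided bound $v_1 \le \psi_2$ — separately, obtaining the energy bound first since it does not depend on the comparison with $\psi_2$. For \eqref{est:DOP-2}, I would test the variational inequality \eqref{eq:DOP-1} with the admissible competitor $\varphi = u \in u + W_0^{1,p}(B)$, which is legitimate because $u \ge \psi_1$ a.e.\ (as $u \in \mathbb{K}$). This yields
\[
\int_B \langle \mathcal{A}(x,\nabla v_1) - \mathcal{A}(x,\nabla \psi_2), \nabla v_1 - \nabla u\rangle\, dx
+ \int_B \langle \mathbb{V}|v_1|^{p-2}v_1 - \mathbb{V}|\psi_2|^{p-2}\psi_2, v_1 - u\rangle\, dx \le 0.
\]
Then I would rearrange this into an estimate of $\int_B \langle \mathcal{A}(x,\nabla v_1), \nabla v_1\rangle + \mathbb{V}|v_1|^p\, dx$ against cross terms involving $\nabla u$, $\nabla \psi_2$, $u$, $\psi_2$. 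Using the growth bound \eqref{eq:A1-DOP} (namely $|\mathcal{A}(x,\mu)| \le \Lambda|\mu|^{p-1}$) together with monotonicity of $t \mapsto |t|^{p-2}t$ on the Schrödinger term, and then Young's inequality (with the exponents $p$ and $p'$, absorbing the $|\nabla v_1|^p$ and $\mathbb{V}|v_1|^p$ contributions to the left-hand side), I obtain $\int_B |\nabla v_1|^p + \mathbb{V}|v_1|^p\, dx \lesssim \int_B |\nabla u|^p + \mathbb{V}|u|^p + |\nabla \psi_2|^p + \mathbb{V}|\psi_2|^p\, dx$, which is exactly \eqref{est:DOP-2} once one recalls $\mathbb{E}(v) = |\nabla v|^p + \mathbb{V}|v|^p$. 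The only subtlety is handling the Schrödinger cross terms $\mathbb{V}|v_1|^{p-2}v_1(v_1-u)$ etc.; here one splits $\mathbb{V} = \mathbb{V}^{1/p'}\cdot\mathbb{V}^{1/p}$ before applying Young so that the weight is distributed correctly, which is routine but must be done carefully.

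For the bound $v_1 \le \psi_2$ a.e.\ in $B$, the natural idea is to apply Lemma 3.4 (the weak maximum principle) with $w_1 = v_1$ and $w_2 = \psi_2$. I first check the structural hypotheses: since $\psi_1 \le \psi_2$ and $v_1 \ge \psi_1$ does not immediately give $v_1 \le \psi_2$, but on the boundary $v_1 = u \in W_0^{1,p}(B)$ while $\psi_2 \ge 0$ on $\partial\Omega$ — more precisely one needs $(v_1 - \psi_2)^+ \in W_0^{1,p}(B)$, which follows because $v_1 - u \in W_0^{1,p}(B)$ and $u \le \psi_2$; and $v_1 \le \psi_2$ "on $\partial B$" in the trace sense for the same reason. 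The heart of the matter is verifying the variational inequality \eqref{est:weak} for the pair $(v_1, \psi_2)$ against non-negative test functions. To do this I would take $\varphi = \min\{v_1, \psi_2\} = v_1 - (v_1 - \psi_2)^+$ as a competitor in \eqref{eq:DOP-1}: it is admissible because it lies in $u + W_0^{1,p}(B)$ (as $(v_1-\psi_2)^+ \in W_0^{1,p}(B)$) and $\min\{v_1,\psi_2\} \ge \psi_1$ a.e.\ (since both $v_1 \ge \psi_1$ and $\psi_2 \ge \psi_1$). Plugging this $\varphi$ in, the terms $v_1 - \varphi = (v_1-\psi_2)^+$ and $\nabla v_1 - \nabla\varphi = \nabla(v_1-\psi_2)^+$ appear, and after cancellation \eqref{eq:DOP-1} reduces precisely to the statement that the difference of the $\mathcal{A}$- and $\mathbb{V}$-forms of $v_1$ and $\psi_2$, tested against $(v_1-\psi_2)^+ \ge 0$, is $\le 0$ — i.e.\ exactly \eqref{est:weak} restricted to this one test function, which by the argument in the proof of Lemma 3.4 (using \eqref{eq:A2-DOP} and Lemma 3.3 on $\Omega' = B \cap \{v_1 \ge \psi_2\}$) forces $(v_1 - \psi_2)^+ \equiv 0$.

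The main obstacle I anticipate is the bookkeeping around admissibility of the test functions: one must check at each step that the competitor stays in the constraint set $\{\varphi \in u + W_0^{1,p}(B) : \varphi \ge \psi_1\}$, and in particular that truncations like $\min\{v_1,\psi_2\}$ respect both the affine boundary condition and the lower obstacle simultaneously. Once the correct competitors are identified, the analytic content (Young's inequality for the energy bound, and the monotonicity/ellipticity argument of Lemma 3.4 for the comparison) is essentially forced. A secondary point requiring care is that Lemma 3.4 as stated needs \eqref{est:weak} for \emph{all} non-negative $\varphi \in W_0^{1,p}$; here I only derive it for $\varphi = (v_1-\psi_2)^+$, so rather than invoking Lemma 3.4 as a black box I would reproduce its short final argument directly with this single test function, which is cleaner and avoids any gap.
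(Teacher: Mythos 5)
Your proposal is correct and follows essentially the same route as the paper: the energy bound \eqref{est:DOP-2} is obtained by testing \eqref{eq:DOP-1} with $\varphi=u$ and applying the growth condition \eqref{eq:A1-DOP} plus Young's inequality, and the comparison $v_1\le\psi_2$ is obtained by testing with $\varphi=v_1-(v_1-\psi_2)^+$ (whose admissibility you verify exactly as needed, using $u\le\psi_2$ for the zero trace and $v_1,\psi_2\ge\psi_1$ for the constraint), then using \eqref{eq:A2-DOP} together with Lemma~\ref{lem:Phi(v,v)} on $B\cap\{v_1\ge\psi_2\}$ and letting $\varepsilon\to0$, which is precisely the paper's argument. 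The only cosmetic difference is that you keep $\mathbb{V}|v_1|^p$ directly on the left-hand side in the energy estimate, whereas the paper routes the Schr\"odinger term through $\mathbf{\Phi}(u,v_1)$; both lead to \eqref{est:DOP-2}.
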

\begin{proof}
By taking $u$ as the test function in~\eqref{eq:DOP-1}, one has
\begin{align*}
& \int_{B} \langle \mathcal{A}(x,\nabla v_1), \nabla v_1 \rangle dx  + {  \int_{B} \langle \mathbb{V}|u|^{p-2}u -\mathbb{V}|v_1|^{p-2}v_1, u - v_1\rangle dx} \\
& \hspace{2cm}  \le \int_{B} \langle \mathcal{A}(x,\nabla v_1), \nabla u \rangle dx   + \int_{B} \langle \mathcal{A}(x,\nabla \psi_2), \nabla v_1 - \nabla u \rangle dx \\
& \hspace{4cm} + {  \int_{B} \langle \mathbb{V}|u|^{p-2}u, u - v_1\rangle dx} + {  \int_{B} \langle \mathbb{V}|\psi_2|^{p-2}\psi_2, u - v_1\rangle dx},
\end{align*}
which with~\eqref{eq:A1-DOP} and~\eqref{eq:A2-DOP} implies to
\begin{align*}
& \int_{B} |\nabla v_1|^{p} dx + {  \int_{B} \mathbb{V} \mathbf{\Phi}(u,v_1) dx}  \le C \left(\int_{B} |\nabla v_1|^{p-1} |\nabla u| dx  + \int_{B} |\nabla \psi_2|^{p-1}  |\nabla v_1| dx \right. \\
& \hspace{2cm} \left. + \int_{B} |\nabla \psi_2|^{p-1}  |\nabla u| dx + {  \int_{B}  \mathbb{V}|u|^{p-1} |u-v_1| dx} + {  \int_{B}  \mathbb{V}|\psi_2|^{p-1} |u-v_1| dx} \right).
\end{align*}
It is not difficult to show~\eqref{est:DOP-2} by applying H{\"o}lder's and Young's inequalities. \\

In order to prove $v_1 \le \psi_2$ a.e. in $B$, we perform a similar method as in the proof of previous lemma. In particular, we consider $v_1 - (v_1-\psi_2)^{+}$ as the test function in~\eqref{eq:DOP-1}, combining with~\eqref{eq:A2-DOP} and Lemma~\ref{lem:Phi(v,v)} for every $\varepsilon>0$, to have
\begin{align} \label{est:DOP-4}
\int_{B} \mathbb{E}((v_1- \psi_2)^+) dx = \int_{D} \mathbb{E}(v_1- \psi_2) dx & \le \varepsilon C \int_{B \cap \{v_1 \ge \psi_2\}} \mathbb{E}(u) + \mathbb{E}(\psi_2) dx.
\end{align}
Passing $\varepsilon$ to $0$ in~\eqref{est:DOP-4}, one concludes that $v_1 \le \psi_2$ almost everywhere in $B$. 
\end{proof}

\begin{lemma}\label{lem:u-v-DOP}
Let $u \in \mathbb{K}$ be a weak solution to~\eqref{eq:DOP} and $B$ be an open ball in $\Omega$. Assume that $v \in u + W^{1,p}_0(B)$ solves the following equations
\begin{align}\label{eq:v-local}
\begin{cases} -\mathrm{div} \left(\mathcal{A}(x,\nabla v)\right) + { \mathbb{V}|v|^{p-2}v} & = 0,  \ \mbox{ in } \ B, \\ \hspace{2.5cm} v & = u, \ \mbox{ on } \ \partial B. \end{cases}
\end{align}
Then for every $\varepsilon \in (0,1)$, one can find $C = C(p,\varepsilon)>0$ such that
\begin{align}\label{est:u-v-DOP}
\fint_{B} \mathbb{E}(u - v) dx \le \varepsilon  \fint_{B} \mathbb{E}(u) dx + C \fint_{B} |\mathcal{F}|^p dx.
\end{align}
\end{lemma}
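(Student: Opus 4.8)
The plan is to prove~\eqref{est:u-v-DOP} by a two-sided comparison: I would feed a truncated copy of $v$ into the variational inequality~\eqref{eq:DOP} for $u$, feed $u-v$ into the weak formulation of~\eqref{eq:v-local}, and subtract. Since $u\in\mathbb{K}$ forces $\psi_1\le u\le\psi_2$ a.e.\ and $v=u$ on $\partial B$, the \emph{median truncation} $\varphi:=\min\{\max\{v,\psi_1\},\psi_2\}$, extended by $u$ on $\Omega\setminus B$, lies in $\mathbb{K}$, and one has the pointwise identity $u-\varphi=(u-v)+w$ with $w:=(v-\psi_2)^+-(\psi_1-v)^+$; here $w$ is supported on the ``bad set'' $\{v<\psi_1\}\cup\{v>\psi_2\}$, and $(\psi_1-v)^+,(v-\psi_2)^+\in W^{1,p}_0(B)$ because $v=u$ and $\psi_1\le u\le\psi_2$ on $\partial B$. (Alternatively one may route the comparison through the obstacle-problem solution $v_1$ of Lemma~\ref{lem:A1}, estimating $\fint_B\mathbb{E}(u-v_1)$ and $\fint_B\mathbb{E}(v_1-v)$ separately and using~\eqref{est:DOP-2}; the estimates have the same shape.) I would also record the elementary bound $\int_B\mathbb{E}(v)\,dx\le C\int_B\mathbb{E}(u)\,dx$, obtained by testing~\eqref{eq:v-local} with $v-u$ and using~\eqref{eq:A1-DOP}--\eqref{eq:A2-DOP} with Young's inequality.

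Taking $\varphi$ in~\eqref{eq:DOP}, $u-v$ in the equation for $v$, and subtracting, the left-hand side becomes
\[
\int_B\langle\mathcal{A}(x,\nabla u)-\mathcal{A}(x,\nabla v),\nabla(u-v)\rangle\,dx+\int_B\big(\mathbb{V}|u|^{p-2}u-\mathbb{V}|v|^{p-2}v\big)(u-v)\,dx,
\]
which, by the monotonicity~\eqref{eq:A2-DOP}, the scalar inequality $(|a|^{p-2}a-|b|^{p-2}b)(a-b)\ge c(p)\,\mathbf{\Phi}(a,b)$ and $\mathbb{V}\ge0$, is bounded below by $c\int_B\big(\mathbf{\Phi}(\nabla u,\nabla v)+\mathbb{V}\,\mathbf{\Phi}(u,v)\big)\,dx$. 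The right-hand side splits into a \emph{regular part} $\int_B f(u-v)\,dx+\int_B\langle\mathcal{B}(x,\mathbf{F}),\nabla(u-v)\rangle\,dx$ and a \emph{correction part} $\int_B fw\,dx+\int_B\langle\mathcal{B}(x,\mathbf{F}),\nabla w\rangle\,dx-\int_B\langle\mathcal{A}(x,\nabla u),\nabla w\rangle\,dx-\int_B\big(\mathbb{V}|u|^{p-2}u\big)w\,dx$ (the terms carrying $\mathcal{A}(x,\nabla v)$ and $\mathbb{V}|v|^{p-2}v$ from the equation for $v$ having been merged into the monotone left-hand side). The regular part is routine: by~\eqref{eq:A1-DOP}, H\"older's, Young's and the Sobolev--Poincar\'e inequalities on $B$ it is at most $\varepsilon\int_B\mathbb{E}(u-v)\,dx+C\int_B|\mathcal{F}|^p\,dx$.

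\textbf{The main obstacle is the correction part.} Since its integrands live on the bad set, the crux is to control the energy of $v$ there by the data only. I would do this by testing the weak formulation of~\eqref{eq:v-local} with the admissible functions $(\psi_1-v)^+$ and $(v-\psi_2)^+$: using the coercivity $\langle\mathcal{A}(x,\mu),\mu\rangle\ge\Lambda^{-1}|\mu|^p$ (which follows from~\eqref{eq:A2-DOP} with $\mu_2=0$, recalling $\mathcal{A}(x,0)=0$ from~\eqref{eq:A1-DOP}), the growth~\eqref{eq:A1-DOP}, Young's inequality, and a short case analysis on the sign of $v$ to handle $\big(\mathbb{V}|v|^{p-2}v\big)(\psi_1-v)^+$ and $\big(\mathbb{V}|v|^{p-2}v\big)(v-\psi_2)^+$, one arrives at
\[
\int_{\{v<\psi_1\}}\mathbb{E}(v)\,dx+\int_{\{v>\psi_2\}}\mathbb{E}(v)\,dx\ \le\ C\int_B\big(\mathbb{E}(\psi_1)+\mathbb{E}(\psi_2)\big)\,dx\ \le\ C\int_B|\mathcal{F}|^p\,dx.
\]
With this, each correction term is bounded via~\eqref{eq:A1-DOP}, Young's and Sobolev--Poincar\'e by $\varepsilon\int_B\mathbb{E}(u)\,dx+C\int_B|\mathcal{F}|^p\,dx$: the gradient and potential parts of $w$ are absorbed using the last display and $\int_B(|\nabla\psi_1|^p+|\nabla\psi_2|^p)\,dx\le\int_B|\mathcal{F}|^p\,dx$, while $\mathcal{A}(x,\nabla u)$ and $\mathbb{V}|u|^{p-2}u$ supply the small multiple of $\mathbb{E}(u)$. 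I expect this estimate for $v$ on the region where it fails the obstacle constraints to be the only genuinely delicate point; the rest is bookkeeping with Young's inequality, plus a harmless fixed scale factor from Poincar\'e when $f$ or $\mathbf{F}$ is paired with $u-v$ or $w$ (bounded because $B\subset\Omega$).

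Collecting the two parts, $c\int_B\big(\mathbf{\Phi}(\nabla u,\nabla v)+\mathbb{V}\,\mathbf{\Phi}(u,v)\big)\,dx\le\varepsilon\int_B\mathbb{E}(u)\,dx+\varepsilon\int_B\mathbb{E}(u-v)\,dx+C\int_B|\mathcal{F}|^p\,dx$. Finally, Lemma~\ref{lem:Phi(v,v)} with $g_1=u$, $g_2=v$ bounds $\int_B\mathbb{E}(u-v)\,dx$ above by $\varepsilon\int_B\mathbb{E}(u)\,dx$ plus a constant times $\int_B\big(\mathbf{\Phi}(\nabla u,\nabla v)+\mathbb{V}\,\mathbf{\Phi}(u,v)\big)\,dx$; substituting, absorbing the $\varepsilon\int_B\mathbb{E}(u-v)\,dx$ term into the left-hand side for $\varepsilon$ small, dividing by $\mathcal{L}^n(B)$ and relabelling $\varepsilon$, one obtains~\eqref{est:u-v-DOP}.
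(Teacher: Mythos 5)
Your proposal is correct, but it follows a genuinely different route from the paper. The paper never compares $u$ with $v$ directly: it interpolates through two auxiliary functions, first the single–obstacle solution $v_1$ of~\eqref{eq:DOP-1} (whose admissibility as a test function in~\eqref{eq:DOP}, the bound~\eqref{est:DOP-2} and the property $v_1\le\psi_2$ come from Lemma~\ref{lem:A1}), then the solution $v_2$ of the auxiliary equation~\eqref{eq:omega-2} with $\psi_1$-data (whose admissibility $v_2\ge\psi_1$ in the obstacle problem for $v_1$ comes from the comparison principle of Lemma~\ref{lem:weak-max}), and finally the homogeneous solution $v$; the three estimates~\eqref{est:DOP-8}, \eqref{est:DOP-10}, \eqref{est:DOP-11} are then summed. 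You instead perform a single two-sided comparison, using the median truncation $\min\{\max\{v,\psi_1\},\psi_2\}$ (extended by $u$) as the admissible competitor in~\eqref{eq:DOP} and $u-v$ in the equation~\eqref{eq:v-local}; the price is the correction terms supported on $\{v<\psi_1\}\cup\{v>\psi_2\}$, which you control by the bad-set energy estimate obtained from testing~\eqref{eq:v-local} with $(\psi_1-v)^+$ and $(v-\psi_2)^+$ — this works precisely because $v=u$ on $\partial B$ and $\psi_1\le u\le\psi_2$ there, and the sign analysis for the term $\mathbb{V}|v|^{p-2}v$ does yield $c\,\mathbb{V}|v|^p-C\,\mathbb{V}(|\psi_1|^p+|\psi_2|^p)$ pointwise on the bad set, so your key display is provable with the stated tools. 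Both arguments close in the same way, via Lemma~\ref{lem:Phi(v,v)} and absorption of the $\varepsilon\int_B\mathbb{E}(u-v)\,dx$ term. What each approach buys: yours is shorter and avoids introducing $v_1$, $v_2$ and the maximum-principle machinery, at the cost of the truncation bookkeeping and the bad-set estimate; the paper's chain keeps every test function admissible within an obstacle-problem structure (so no constraint-violation set ever appears) and recycles Lemmas~\ref{lem:weak-max}--\ref{lem:A1}, but needs three separate comparisons and the verification that the extension of $v_1$ by $u$ lies in $\mathbb{K}$. The only points to make explicit if you write this up are the scalar monotonicity inequality $(|a|^{p-2}a-|b|^{p-2}b)(a-b)\ge c(p)\,\mathbf{\Phi}(a,b)$ (which the paper also uses implicitly) and the dependence of the Sobolev--Poincar\'e constant on $\mathrm{diam}(\Omega)$, which is harmless since it is part of \texttt{data}.
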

\begin{proof}
Let $v_1 \in u + W_0^{1,p}(B)$ be the unique solution to obstacle problem~\eqref{eq:DOP-1}. Thanks to Lemma~\ref{lem:A1}, we may extend $v_1$ to $\Omega \setminus B$ by $u$ so that $v_1 \in \mathbb{K}$ and $v_1-u = 0$ in $\Omega \setminus B$. Adding two inequality corresponding to the ones by taking $v_1$ and $u$ as test functions of problems~\eqref{eq:DOP} and~\eqref{eq:DOP-1} respectively, one has
\begin{align}\nonumber
\int_{B} & \left\langle \mathcal{A}(x,\nabla u) - \mathcal{A}(x,\nabla v_1), \nabla u - \nabla v_1 \right\rangle dx  + {  \int_{B} \langle \mathbb{V}(|u|^{p-2}u - |v_1|^{p-2}v_1), u - v_1\rangle dx} \\  \nonumber
& \hspace{3cm}   \le { \int_{B} f(u - v_1) dx}  + {  \int_{B} \langle \mathbb{V}|\psi_2|^{p-2}\psi_2, v_1 - u\rangle dx} \\ \label{est:DOP-5}
& \hspace{5cm}  +  \int_{B} \left\langle \mathcal{B}(x,\mathbf{F}) - \mathcal{A}(x,\nabla \psi_2), \nabla u - \nabla v_1 \right\rangle dx.
\end{align}
Combining~\eqref{est:DOP-5} with assumptions~\eqref{eq:A1-DOP} and \eqref{eq:A2-DOP} on nonlinear operators $\mathcal{A}$, $\mathcal{B}$, it follows that
\begin{align}\nonumber 
 \int_{B} \mathbf{\Phi}(\nabla u,\nabla v_1)   & + \mathbb{V} \mathbf{\Phi}(u, v_1)  dx  \le C \left({ \int_{B} |f||u - v_1| dx} + {  \int_{B} \mathbb{V}|\psi_2|^{p-1} |v_1 - u| dx} \right. \\ \label{est:DOP-6} 
 &  \hspace{1cm} \left.  +  \int_{B}  |\nabla u - \nabla v_1| |\mathbf{F}|^{p-1} dx  + \int_{B} |\nabla \psi_2|^{p-1} |\nabla u - \nabla v_1| dx\right).
\end{align}
The first term on the right hand-side of~\eqref{est:DOP-6} can be estimated by combining
 H{\"o}lder's, Sobolev's and Young's inequalities as follows
\begin{align}\nonumber
{  C \int_{B}  |f||u - v_1| dx} & \le  C \left(\int_{B} |u-v_1|^{p}dx\right)^{\frac{1}{p}} \left(\int_{B} |f|^{p'} dx\right)^{\frac{1}{p'}} \\ \nonumber
& \le C \left(\int_{B} |\nabla u - \nabla v_1|^{p} dx\right)^{\frac{1}{p}} \left(\int_{B} |f|^{p'} dx\right)^{\frac{1}{p'}}   \\ \label{ineq:S_2}
& \le \frac{\varepsilon_1}{4}  \int_{B}  |\nabla u - \nabla v_1|^p  dx +  C(\varepsilon_1) \int_{B} |f|^{p'} dx,
\end{align} 
for every $\varepsilon_1 >0$. Similar considerations apply to the remain terms on the right hand side of~\eqref{est:DOP-6}. It follows from~\eqref{ineq:S_2} that
\begin{align}\nonumber
 \int_{B} \mathbf{\Phi}(\nabla u,\nabla v_1) + \mathbb{V} \mathbf{\Phi}(u, v_1)  dx  & \le \varepsilon_1 \int_{B} \mathbb{E}(u-v_1) dx  \\ \label{est:DOP-7}
 & \hspace{1cm} + C(\varepsilon_1) \int_{B} \left(|f|^{p'} + |\mathbf{F}|^p + \mathbb{E}(\psi_2)\right) dx.
\end{align}
Thanks to Lemma~\ref{lem:Phi(v,v)}, for any $\varepsilon \in (0,1)$, there holds
\begin{align}\nonumber
\int_{B} \mathbb{E}(u-v_1) dx  &\le  \frac{\varepsilon}{2}  \int_{B} \mathbb{E}(u) dx  + C(\varepsilon) \int_{B} ( \mathbf{\Phi}(\nabla u, \nabla v_1) + {  \mathbb{V} \mathbf{\Phi}(u, v_1)}) dx,
\end{align}
which implies to the following estimate by substituting~\eqref{est:DOP-7} into the last term
\begin{align}\nonumber
\int_{B} \mathbb{E}(u-v_1) dx &\le \frac{\varepsilon}{2}  \int_{B} \mathbb{E}(u) dx  + \varepsilon_1 C(\varepsilon) \int_{B} \mathbb{E}(u-v_1) dx \\ \label{est:DOP-8a}
& \hspace{4cm}  + C(\varepsilon_1,\varepsilon) \int_{B} \left(|f|^{p'} + |\mathbf{F}|^p +  \mathbb{E}(\psi_2)\right) dx.
\end{align}
Let us chose $\varepsilon_1   = [{2C(\varepsilon)}]^{-1}$ in~\eqref{est:DOP-8a} to get that 
\begin{align}\label{est:DOP-8} 
\int_{B} \mathbb{E}(u-v_1) dx \le  \varepsilon  \int_{B} \mathbb{E}(u) dx + C(\varepsilon) \int_{B} \left(|f|^{p'} + |\mathbf{F}|^p + \mathbb{E}(\psi_2)\right) dx.
\end{align}
We consider $v_2$ as the unique weak solution to the equations below
\begin{align}\label{eq:omega-2}
\begin{cases} -\mathrm{div} \left(\mathcal{A}(x,v_2)\right) + {  \mathbb{V}|v_2|^{p-2}v_2} & = \ -\mathrm{div} \left(\mathcal{A}(x,\psi_1)\right)  + {  \mathbb{V}|\psi_1|^{p-2}\psi_1},  \ \mbox{ in } \ B,\\ 
\hspace{2cm} v_2 & = \qquad v_1, \hspace{4cm}  \ \mbox{ on } \partial B. \end{cases}
\end{align}
Since $v_2 = v_1 \ge \psi_1$ almost everywhere on $\partial B$ so it deduces from Lemma~\ref{lem:weak-max} that $v_2 \ge \psi_1$ almost everywhere in $B$. Therefore we may take $v_2$ as the test function in~\eqref{eq:DOP-1} and choose $v_1-v_2$ as the test function in variational formula solving equation~\eqref{eq:omega-2}, to observe that
\begin{align}\nonumber
& \int_{B} \left\langle \mathcal{A}(x,\nabla v_1) - \mathcal{A}(x,\nabla v_2), \nabla v_1 - \nabla v_2\right\rangle dx  + {  \int_{B} \langle \mathbb{V}|v_1|^{p-2}v_1 - \mathbb{V}|v_2|^{p-2}v_2, v_1 - v_2\rangle dx}  \\ \nonumber
& \hspace{2cm} = \int_{B}  \left\langle \mathcal{A}(x,\nabla \psi_2), \nabla v_1 - \nabla v_2\right\rangle dx - \left\langle \mathcal{A}(x,\nabla \psi_1), \nabla v_1 - \nabla v_2\right\rangle dx \\ \label{est:DOP-9}
& \hspace{3cm} + {  \int_{B} \langle \mathbb{V}|\psi_2|^{p-2}\psi_2, v_1 - v_2\rangle dx}- {  \int_{B} \langle \mathbb{V}|\psi_1|^{p-2}\psi_1, v_1 - v_2\rangle dx}.
\end{align}
From~\eqref{est:DOP-9}, the proof is essentially the same as the previous one in~\eqref{est:DOP-8}, once again we may show that
\begin{align}\label{est:DOP-10}
& \int_{B} \mathbb{E}(v_1 - v_2) dx  \le \varepsilon  \int_{B} \mathbb{E}(v_1) dx + C(\varepsilon) \int_{B} \left(\mathbb{E}(\psi_1) + \mathbb{E}(\psi_2)\right) dx,
\end{align}
for every $\varepsilon \in (0,1)$. Let $v$ be the unique weak solution to the following equations
\begin{align}\label{eq:omega-3}
\begin{cases} -\mathrm{div} \left(\mathcal{A}(x,v)\right) + {  \mathbb{V}|v|^{p-2}v} & = \ 0,  \ \ \, \mbox{ in } \ B, \\ \hspace{2.5cm} v & = \ v_2,  \ \mbox{ on } \partial B. \end{cases}
\end{align}
We emphasize that since $v_2 = v_1 = u$ on $\partial B$, so two problems~\eqref{eq:omega-3} and~\eqref{eq:v-local} are the same. A similar proof remains valid to obtain the following estimate
\begin{align}\label{est:DOP-11}
\int_{B} \mathbb{E}(v_2 - v) dx \le \varepsilon  \int_{B} \mathbb{E}(v) dx + C(\varepsilon) \int_{B} \mathbb{E}(\psi_1) dx.
\end{align}
Collecting the estimates in~\eqref{est:DOP-2}, \eqref{est:DOP-8}, \eqref{est:DOP-10} and~\eqref{est:DOP-11} to discover that
\begin{align*}
\int_{B} \mathbb{E}(u-v) dx &\le 3^{p-1}\varepsilon \int_{B} \left(\mathbb{E}(u) + \mathbb{E}(v)\right)dx  + C(\varepsilon) \int_{B} |\mathcal{F}|^p dx,
\end{align*}
which guarantees~\eqref{eq:lem-Phi(u,v)}, by noting that 
$\mathbb{E}(v) \le 2^{p-1}(\mathbb{E}(u) + \mathbb{E}(u-v))$, and changing a suitable value of $\varepsilon>0$. 
\end{proof}
\begin{remark}\label{cor:global}
We remark that the proof of~\eqref{est:u-v-DOP} in Lemma~\ref{lem:u-v-DOP} even holds for the case $B = \Omega$ which implies that $v = 0$. Therefore, the inequality~\eqref{est:u-v-DOP} becomes to the following global estimate
\begin{align}\label{eq-cor:global}
\int_{\Omega} \mathbb{E}(u) dx \le C \int_{\Omega} |\mathcal{F}|^p dx.
\end{align}
\end{remark}

Next, let us recall a reverse H{\"o}lder's inequality on $\mathbb{E}(v)$. We refer to~\cite{LO_schro} for the proof.

\begin{lemma}\label{lem:RH}
Under assumptions of Lemma~\ref{lem:u-v-DOP} and let $\mathbb{V} \in \mathcal{RH}^{\gamma}$ for some $\gamma \in [\frac{n}{p}, n)$ with $p \in (1,n)$, satisfying $\|\mathbb{V}\|_{L^{\gamma;p\gamma}(\Omega)} \le 1$. Then there exists $\delta>0$ such that if problem~\eqref{eq:DOP} satisfies the assumption $(r_0,\delta)-(\mathbb{H})$ for some $r_0>0$ then
\begin{align}\label{est:RH}
\left(\fint_{B_{\varrho}} [\mathbb{E}(v)]^{\gamma} dx \right)^{\frac{1}{\gamma}} \le  C\fint_{B_{2\varrho}} \mathbb{E}(v) dx,
\end{align}
holds for all $\varrho>0$ such that $B_{2\varrho} \subset B$.
\end{lemma}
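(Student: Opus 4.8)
The plan is to treat $v$ as a solution of a $p$-Laplacian--type equation with a zeroth-order right-hand side, namely $-\mathrm{div}(\mathcal{A}(x,\nabla v)) = -\mathbb{V}|v|^{p-2}v$ in $B$, and to run the classical self-improving (Gehring) scheme, taking extra care that the Schr\"odinger term be absorbed into the Dirichlet energy by means of the reverse H\"older class of $\mathbb{V}$ together with the normalization $\|\mathbb{V}\|_{L^{\gamma;p\gamma}(\Omega)}\le 1$. Throughout, $v$ is the function produced in Lemma~\ref{lem:u-v-DOP} (see also Remark~\ref{cor:global}), so it enjoys the local energy bounds coming from that lemma.

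First I would establish a Caccioppoli inequality on concentric balls $B_\varrho\subset B_{2\varrho}\subset B$: testing the equation for $v$ with $\eta^p(v-\bar v)$, where $\eta$ is a standard cutoff between $B_\varrho$ and $B_{2\varrho}$ and $\bar v=\fint_{B_{2\varrho}}v\,dx$, and using the structure conditions \eqref{eq:A1-DOP}--\eqref{eq:A2-DOP}, one obtains
\begin{align*}
\fint_{B_\varrho}\mathbb{E}(v)\,dx \le \frac{C}{\varrho^p}\fint_{B_{2\varrho}}|v-\bar v|^p\,dx + C\fint_{B_{2\varrho}}\mathbb{V}|v|^{p-1}|v-\bar v|\,dx + C\fint_{B_{2\varrho}}\mathbb{V}|v|^p\,dx.
\end{align*}
The two potential terms on the right are the delicate point. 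Since $\gamma\ge n/p$ forces the conjugate exponent to satisfy $p\gamma'\le p^\ast=np/(n-p)$, H\"older's inequality with exponents $\gamma,\gamma'$ followed by the Sobolev--Poincar\'e inequality on $B_{2\varrho}$ and the Morrey bound $\|\mathbb{V}\|_{L^\gamma(B_{2\varrho})}\le(2\varrho)^{n/\gamma-p}$ (a consequence of $\|\mathbb{V}\|_{L^{\gamma;p\gamma}(\Omega)}\le1$) show that these terms are controlled by $\epsilon\,\fint_{B_\varrho}\mathbb{E}(v)\,dx+C(\epsilon)\varrho^{-p}\fint_{B_{2\varrho}}|v-\bar v|^p\,dx$; absorbing the first contribution yields a purely ``first-order'' Caccioppoli inequality.

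Next, applying Sobolev--Poincar\'e once more in the form $\varrho^{-p}\fint_{B_{2\varrho}}|v-\bar v|^p\,dx\le C\big(\fint_{B_{2\varrho}}|\nabla v|^{p_\ast}\,dx\big)^{p/p_\ast}$ for a suitable exponent $p_\ast<p$, and handling the $\mathbb{V}|v|^p$ contribution on the left once more by the Morrey trick, I reach a reverse H\"older inequality
\begin{align*}
\fint_{B_\varrho}\mathbb{E}(v)\,dx \le C\Big(\fint_{B_{2\varrho}}\big(\mathbb{E}(v)\big)^{\frac{1}{\sigma}}\,dx\Big)^{\sigma},\qquad \sigma=\frac{p}{p_\ast}>1,
\end{align*}
valid for all $B_{2\varrho}\subset B$. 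Gehring's lemma then upgrades this to $\mathbb{E}(v)\in L^{1+\kappa_0}_{\mathrm{loc}}(B)$ for some $\kappa_0=\kappa_0(\texttt{data})>0$, with the corresponding quantitative reverse H\"older estimate; note that so far the small-BMO hypothesis is not needed.

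Finally, to reach the prescribed exponent $\gamma$ -- which may be far from $1$ -- I would bootstrap using the interior Calder\'on--Zygmund estimate for the quasilinear equation with small-BMO $\mathcal{A}$ (this is where assumption $(r_0,\delta)$-$(\mathbb{H})$, i.e.\ $[\mathcal{A}]^{r_0}\le\delta$, enters): viewing $-\mathrm{div}(\mathcal{A}(x,\nabla v))=-\mathbb{V}|v|^{p-2}v$ with datum $\mathbb{V}|v|^{p-2}v\in L^{t_k}_{\mathrm{loc}}$, one gains $\nabla v$ in a strictly better Lebesgue space, hence (via the Sobolev embedding of $v$ and H\"older against $\mathbb{V}\in\mathcal{RH}^\gamma$, which self-improves to $\mathcal{RH}^{\gamma+\epsilon}$) a strictly larger integrability exponent $t_{k+1}$ for the datum. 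Since each step multiplies the exponent by a fixed factor exceeding $1$, after finitely many iterations both $|\nabla v|^p$ and $\mathbb{V}|v|^p$, hence $\mathbb{E}(v)$, lie in $L^\gamma_{\mathrm{loc}}(B)$, with scale-invariant constants thanks to the Morrey normalization; tracking these constants and combining with the energy bound that controls $\fint_{B_{2\varrho}}\mathbb{E}(v)\,dx$ gives \eqref{est:RH}. The main obstacle is precisely this last stage: making the Schr\"odinger term genuinely subordinate to the Dirichlet energy at each scale and ensuring that the iteration terminates at the exact exponent $\gamma$ with constants depending only on \texttt{data}. The interior Calder\'on--Zygmund input, established in \cite{LO_schro}, together with the exact interplay $\gamma\in[\frac{n}{p},n)$ and $\|\mathbb{V}\|_{L^{\gamma;p\gamma}(\Omega)}\le1$, is what makes the scheme close.
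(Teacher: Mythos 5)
The first thing to note is that the paper does not prove this lemma at all: it is quoted verbatim from the interior theory of Lee and Ok, with the one-line indication ``we refer to \cite{LO_schro} for the proof''. So your decision to let the closing step rest on the Calder\'on--Zygmund estimates of \cite{LO_schro} is consistent with the source; had you simply invoked their higher integrability/reverse H\"older estimate for the homogeneous Schr\"odinger problem under $\mathbb{V}\in\mathcal{RH}^\gamma$, $\|\mathbb{V}\|_{L^{\gamma;p\gamma}(\Omega)}\le 1$ and $[\mathcal{A}]^{r_0}\le\delta$, you would be doing exactly what the paper does. The difficulty is that you instead attempt to rederive it, and the derivation has concrete soft spots at precisely the points that make the lemma nontrivial.

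Two gaps stand out. First, the Caccioppoli step is not correct as written: the potential terms are integrals over $B_{2\varrho}$, so they cannot be absorbed into $\epsilon\fint_{B_\varrho}\mathbb{E}(v)\,dx$ (one needs intermediate radii and a standard iteration/hole-filling lemma, a fixable but unaddressed point); more seriously, $\fint_{B_{2\varrho}}\mathbb{V}|v|^p\,dx$ contains the non-oscillatory contribution $|\bar v|^p\fint_{B_{2\varrho}}\mathbb{V}\,dx\sim \varrho^{-p}|\bar v|^p$, which is \emph{not} dominated by $\varrho^{-p}\fint_{B_{2\varrho}}|v-\bar v|^p\,dx$, so the claimed ``purely first-order'' Caccioppoli inequality fails; since $\mathbb{V}|v|^p$ is part of $\mathbb{E}(v)$ and must itself end up in $L^\gamma$ with no additive term in \eqref{est:RH}, this zeroth-order piece has to be handled by a different mechanism (local boundedness of $v$ combined with $\mathbb{V}\in\mathcal{RH}^\gamma$ and the Morrey normalization, as in Shen's and Lee--Ok's arguments), not by Sobolev--Poincar\'e alone. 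Second, the decisive step---passing from the small Gehring exponent to the full exponent $\gamma$---is only asserted: the claim that each bootstrap iteration with datum $\mathbb{V}|v|^{p-2}v$ multiplies the integrability exponent by a fixed factor exceeding $1$ is exactly what can fail, since the gain per step is dictated by Sobolev duality and by the integrability of $\mathbb{V}$, and at the borderline $\gamma=\frac{n}{p}$ (i.e.\ $p\gamma=n$) such iterations typically stall rather than overshoot the target. You yourself flag this as ``the main obstacle'' and then defer it to the Calder\'on--Zygmund input of \cite{LO_schro}; but that input (applied to the homogeneous equation) \emph{is} the content of the lemma, so as a self-contained argument the proposal is incomplete---indeed circular---at its key point, even though the overall architecture (Caccioppoli, Gehring, then exploiting $[\mathcal{A}]^{r_0}\le\delta$ and the scale-invariant bound $\|\mathbb{V}\|_{L^{\gamma;p\gamma}(\Omega)}\le 1$ to reach $\gamma$) is the right one and is, in substance, how the cited reference proceeds.
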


\begin{remark}\label{remk}
Under additional assumption on the boundary that $\Omega$ is a $(r_0,\delta)$-Reifenberg flat domain, Lemma~\ref{lem:u-v-DOP} and Lemma~\ref{lem:RH} even hold near the boundary. This means the same conclusions of these lemmas can be obtained by replacing $B$ by $B_{\varrho}(\xi) \cap \Omega$ for all $\xi \in \partial \Omega$ and $0< \varrho \le \mathrm{diam}(\Omega)$. For the proof, we use the same technique as in~\cite{MPTNsub}.
\end{remark}

\section{Proofs of main results}\label{sec:proof}

The rest of this article is devoted to the proofs of main theorems. However, before dealing with Theorem \ref{theo:main-A} and \ref{theo:main-norm}, let us reproduce a standard result that plays a key ingredient in our proofs. This is a corollary of Calder\'on-Zygmund-Krylov-Safonov decomposition, that allows us to work with balls. We address the reader to~\cite[Lemma 4.2]{CC1995} for the detailed proof.

\begin{lemma}\label{lem:cover-lem}
Let a $(r_0,\delta)$-Reifenberg flat domain $\Omega$ for some $r_0>0$. Consider two measurable subsets $\mathcal{Q}\subset \mathcal{N}$ of $\Omega$ satisfying $\mathcal{L}^n\left(\mathcal{Q}\right) \le \varepsilon \mathcal{L}^n\left(B_{r_0}\right)$ for some $\varepsilon \in (0,1)$. Suppose that for any $0 < \varrho \le r_0$ and $\xi \in \Omega$, we have $B_{\varrho}(\xi) \cap \Omega \subset \mathcal{N}$ if provided $\mathcal{L}^n\left(\mathcal{Q} \cap B_{\varrho}(\xi)\right) > \varepsilon \mathcal{L}^n\left(B_{\varrho}(\xi)\right)$. Then there is $C>0$ such that $\mathcal{L}^n\left(\mathcal{Q}\right)\leq C \varepsilon \mathcal{L}^n\left(\mathcal{N}\right)$.
\end{lemma}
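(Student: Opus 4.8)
The statement to prove is Lemma~\ref{lem:cover-lem}, a Vitali-type covering argument underlying the good-$\lambda$ technique (sometimes called the Calder\'on--Zygmund--Krylov--Safonov decomposition). Since the paper explicitly points to \cite[Lemma 4.2]{CC1995}, my plan is to reconstruct the standard argument rather than invent something new. The core idea is a stopping-time / dyadic-style subdivision adapted to the Reifenberg flat geometry: starting from the bounded domain $\Omega$ (which sits inside some ball of radius comparable to $r_0$ after we rescale, or more precisely we run the decomposition at scale $r_0$), we repeatedly bisect cubes (or use the Besicovitch/Vitali covering on balls $B_\varrho(\xi)\cap\Omega$) and keep only those pieces on which the density of $\mathcal{Q}$ exceeds the threshold $\varepsilon$. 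The hypothesis $\mathcal{L}^n(\mathcal{Q})\le\varepsilon\,\mathcal{L}^n(B_{r_0})$ guarantees that the top-level piece is \emph{not} itself selected, so the subdivision genuinely proceeds and produces a nontrivial family of ``bad'' pieces.

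The key steps, in order, would be: (1) Fix $\varepsilon\in(0,1)$ and set up the dyadic subdivision of $\Omega$ at initial scale $r_0$; by the density hypothesis on $\mathcal{Q}$ relative to $B_{r_0}$, no cube at the initial generation is selected as bad. (2) Apply the predecessor/Lebesgue-point stopping argument: for a.e.\ point of $\mathcal{Q}$ the density of $\mathcal{Q}$ in small cubes tends to $1>\varepsilon$, so each such point eventually lands in a selected bad cube $Q_i$; by the stopping rule its dyadic parent $\widetilde{Q}_i$ satisfies $\mathcal{L}^n(\mathcal{Q}\cap\widetilde{Q}_i)\le\varepsilon\,\mathcal{L}^n(\widetilde{Q}_i)$. (3) On each selected $Q_i$ we simultaneously have $\mathcal{L}^n(\mathcal{Q}\cap Q_i)>\varepsilon\,\mathcal{L}^n(Q_i)$ and, on its parent, the reverse inequality $\le\varepsilon\,\mathcal{L}^n(\widetilde{Q}_i)$; since $\mathcal{L}^n(\widetilde{Q}_i)=2^n\mathcal{L}^n(Q_i)$ this forces $\mathcal{L}^n(\mathcal{Q}\cap Q_i)>\varepsilon\,\mathcal{L}^n(Q_i)\ge \tfrac{\varepsilon}{2^n}\cdot(\text{something})$, and summing over the disjoint family $\{Q_i\}$ covering $\mathcal{Q}$ (up to a null set) yields $\mathcal{L}^n(\mathcal{Q})\le 2^n\varepsilon\sum_i\mathcal{L}^n(Q_i\cap\mathcal{N})$, which is the desired comparison once we check the inclusion in the next step. (4) Invoke the hypothesis ``$\mathcal{L}^n(\mathcal{Q}\cap B_\varrho(\xi))>\varepsilon\,\mathcal{L}^n(B_\varrho(\xi))\Rightarrow B_\varrho(\xi)\cap\Omega\subset\mathcal{N}$'': enlarging each bad cube $Q_i$ to a comparable ball $B_{\varrho_i}(\xi_i)$ (with $\varrho_i\le r_0$) on which the density still exceeds $\varepsilon$ — possibly shrinking $\varepsilon$ to $c(n)\varepsilon$ to absorb the passage from cubes to balls — we conclude $Q_i\subset B_{\varrho_i}(\xi_i)\cap\Omega\subset\mathcal{N}$, and the $Q_i$ are essentially disjoint, so $\sum_i\mathcal{L}^n(Q_i)\le C\,\mathcal{L}^n(\mathcal{N})$, giving $\mathcal{L}^n(\mathcal{Q})\le C\varepsilon\,\mathcal{L}^n(\mathcal{N})$.

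I expect the main obstacle to be the geometric bookkeeping forced by the Reifenberg flatness rather than any deep analytic content: standard dyadic cube decompositions are tailored to $\mathbb{R}^n$ or to nice domains, and near $\partial\Omega$ the pieces $B_\varrho(\xi)\cap\Omega$ are only ``almost half-balls'', so one must verify that the doubling estimates $\mathcal{L}^n(B_\varrho(\xi)\cap\Omega)\sim\varrho^n$ and the comparability between a selected cube and its enlarged ball hold uniformly, with constants depending only on $n$ (and weakly on $\delta$, which is harmless since $\delta$ is small and the flatness only improves the measure-density bounds). The other mildly delicate point is the transition between cubes and balls in step (4): one has to fix, once and for all, a dimensional constant so that the density threshold $\varepsilon$ used at the cube level translates to a genuine density-above-$\varepsilon$ statement at the ball level, which is exactly why the final constant $C$ is allowed to depend on $n$ and the conclusion is stated with $C\varepsilon$ rather than $\varepsilon$. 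Since all of this is classical and is carried out in detail in \cite{CC1995} (and its Reifenberg-domain adaptations in, e.g., \cite{BW2}), I would present the argument in condensed form and refer the reader there for the routine verifications.
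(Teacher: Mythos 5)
The paper offers no proof of this lemma at all: it is quoted as a known covering result and the reader is referred to \cite[Lemma 4.2]{CC1995} (and its Reifenberg-domain adaptations). Your sketch reconstructs exactly that standard Calder\'on--Zygmund--Krylov--Safonov stopping-time argument (Lebesgue density, parent-cube comparison, cube-to-ball passage using the measure-density property of Reifenberg flat domains, then summing over the disjoint selected pieces inside $\mathcal{N}$), so it is correct and follows essentially the same route as the source the paper cites.
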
 

\begin{proof}[Proof of Theorem~\ref{theo:main-A}]
The main purpose of this proof is to find $\varepsilon_0 \in (0,1)$ and $\delta>0$ such that~\eqref{eq:mainlambda-P} is valid for all $\lambda>0$ and $\varepsilon \in (0,\varepsilon_0)$, under assumption $(r_0,\delta)-(\mathbb{H})$ of problem~\eqref{eq:DOP} for some $r_0>0$. The inequality~\eqref{eq:mainlambda-P} can be rewritten as 
$$\mathcal{L}^n(\mathcal{Q}^{\lambda}_{\varepsilon}) \le C \varepsilon \mathcal{L}^n(\mathcal{N}^{\lambda}),$$ 
where two measurable sets $\mathcal{Q}^{\lambda}_{\varepsilon}$ and $\mathcal{N}^{\lambda}$ in this inequality are defined by 
\begin{align}\nonumber 
& \mathcal{Q}^{\lambda}_{\varepsilon} = \{{\mathbf{M}}(\mathbb{E}(u))>\varepsilon^{-\theta}\lambda, {\mathbf{M}}(|\mathcal{F}|^p) \le \beta \lambda \} \ \mbox{ and } \ \mathcal{N}^{\lambda} = \{ {\mathbf{M}}(\mathbb{E}(u))> \lambda \},
\end{align}
where $\beta = \beta(\texttt{data},\theta,\varepsilon) > 0$ will be determined later. The proof is obtained as an application of the covering Lemma~\ref{lem:cover-lem} for $\mathcal{Q}^{\lambda}_{\varepsilon}$ and $\mathcal{N}^{\lambda}$. So we only need to check all hypotheses of Lemma~\ref{lem:cover-lem}.

We may assume $\mathcal{Q}^{\lambda}_{\varepsilon} \neq \emptyset$ which gives us a point $\xi_1 \in \Omega$ satisfying ${\mathbf{M}}(|\mathcal{F}|^p)(\xi_1) \le {\beta}\lambda$. For $r_0>0$, we first show that $\mathcal{L}^n\left(\mathcal{Q}^{\lambda}_{\varepsilon}\right) \le \varepsilon \mathcal{L}^n\left(B_{R}\right)$. Using the fundamental boundedness property of ${\mathbf{M}}$ and applying estimate~\eqref{eq-cor:global} in Remark~\ref{cor:global}, there holds
\begin{align}\label{est-3.1-P}
\mathcal{L}^n(\mathcal{Q}^{\lambda}_{\varepsilon}) \le \mathcal{L}^n(\{ {\mathbf{M}}(\mathbb{E}(u))>\varepsilon^{-\theta}\lambda \}) \le \frac{C}{\varepsilon^{-\theta} \lambda}\int_{\Omega}{\mathbb{E}(u) dx} \le \frac{C}{\varepsilon^{-\theta} \lambda}\int_{\Omega}{|\mathcal{F}|^p dx}  .
\end{align}
Now we can bound the integral in the last term of~\eqref{est-3.1-P} by the integral over the ball centered at $\xi_1$ and radius $\mathrm{diam}(\Omega)$, it gives us
\begin{align*}
\mathcal{L}^n(\mathcal{Q}^{\lambda}_{\varepsilon}) 
\le \frac{C (\mathrm{diam}(\Omega))^n}{\varepsilon^{-\theta} \lambda}  {\mathbf{M}}(|\mathcal{F}|^p)(\xi_1) \le C \beta\varepsilon^{\theta}  (\mathrm{diam}(\Omega))^n \le \varepsilon  \mathcal{L}^n(B_{r_0}).
\end{align*}
We emphasize that the last inequality holds for $\beta>0$ satisfying 
$$C \beta \varepsilon^{\theta-1} \left(\mathrm{diam}(\Omega)/r_0\right)^{n} <  1.$$

In the second step, we prove the following statement by contradiction: for every $0< \varrho \le r_0$ and $\xi \in \Omega$, if $\mathcal{L}^n\left(\mathcal{Q}^{\lambda}_{\varepsilon} \cap B_{\varrho}(\xi)\right) \ge \varepsilon \mathcal{L}^n\left(B_{\varrho}(\xi)\right)$ then $B_{\varrho}(\xi) \cap \Omega \subset \mathcal{N}^{\lambda}$. Suppose that we can find $\xi_2 \in B_{\varrho}(\xi)\cap \Omega \cap (\mathbb{R}^n\setminus \mathcal{N}^{\lambda})$ and $\xi_3 \in \mathcal{Q}^{\lambda}_{\varepsilon} \cap B_{\varrho}(\xi)$ which imply to
\begin{align}\label{eq:x2-P}
{\mathbf{M}}(\mathbb{E}(u))(\xi_2) \le \lambda \ \mbox{ and } \
{\mathbf{M}}(|\mathcal{F}|^p)(\xi_3) \le \beta \lambda.
\end{align}
Therefore, in the rest of the proof, we only need to show that
\begin{align}\label{est:goal}
\mathcal{L}^n(\mathcal{Q}^{\lambda}_{\varepsilon} \cap B_{\varrho}(\xi)) < \varepsilon \mathcal{L}^n(B_{\varrho}(\xi)).
\end{align} 
For any $\zeta \in B_{\varrho}(\xi)$, it is obvious to see that $B_{r}(\zeta) \subset B_{3r}(\xi_2), \ \forall r \ge \varrho$, which allows us taking into account~\eqref{eq:x2-P} to find
\begin{align*}
\sup_{r \ge \varrho}{\fint_{B_{r}(\zeta)}{\mathbb{E}(u) dx}} \le 3^n\sup_{r \ge \varrho}{\fint_{B_{3r}(\xi_2)}{\mathbb{E}(u) dx}} \le 3^n \lambda.
\end{align*}
From this fact and the definition of the cutoff maximal function below
\begin{align*}
{\mathbf{M}}^{\varrho}(\mathbb{E}(u))(\zeta) = \sup_{0< r < \varrho}{\fint_{B_{r}(\zeta)}{\mathbb{E}(u) dx}},
\end{align*}
we may conclude that 
$${\mathbf{M}}(\mathbb{E}(u))(\zeta)  \le \max \left\{ {\mathbf{M}}^{\varrho}(\mathbb{E}(u))(\zeta) ; \  3^n \lambda \right\}, \ \mbox{ for all } \ \zeta \in B_{\varrho}(\xi).$$ For this reason, for every $\varepsilon \in (0,3^{-\frac{n}{\theta}})$, there holds
\begin{align*}
\mathcal{Q}^{\lambda}_{\varepsilon} \cap B_{\varrho}(\xi) = \{{\mathbf{M}}^{\varrho}(\mathbb{E}(u))> \varepsilon^{-\theta}\lambda; \ {\mathbf{M}}(|\mathcal{F}|^p) \le \beta \lambda \} \cap B_{\varrho}(\xi),
\end{align*}
which deduces to
\begin{align}\label{eq:res11-P}
\mathcal{Q}^{\lambda}_{\varepsilon} \cap B_{\varrho}(\xi) \subset \{{\mathbf{M}^{\varrho}}(\mathbb{E}(u))> \varepsilon^{-\theta}\lambda \} \cap B_{\varrho}(\xi).
\end{align} 
There are two cases $B_{4\varrho}(\xi) \subset\Omega$ or $B_{4\varrho}(\xi) \cap \partial\Omega \neq \emptyset$. Let us set $B_{j} = B_{2^{j}\varrho}(\xi)$ with $j = 1,2$ in the first case. Otherwise there exists $\xi_4 \in \partial \Omega$ such that $|\xi_4 - \xi| = \mathrm{dist}(\xi,\partial \Omega) \le 4\varrho$, we choose $B_{j} = B_{2^{j+2}\varrho}(\xi_4) \cap \Omega$ with $j = 1,2$ in this case. Let $v \in u + W_0^{1,p}(B_2)$ be the unique solution to the following problem
\begin{equation}\nonumber 
\begin{cases} -\mbox{div} \left( \mathcal{A}(x,\nabla v)\right) + \mathbb{V}|v|^{p-2}v & = \ 0, \quad  \quad \mbox{ in } B_2,\\ 
\hspace{2.2cm} v & = \ u, \qquad \mbox{ on } \partial B_2.\end{cases}
\end{equation}
Lemma~\ref{lem:u-v-DOP} and Lemma~\ref{lem:RH} combining with Remark~\eqref{remk} give us the following reverse H{\"o}lder's inequality
\begin{align}\label{est:101-P}
\left(\fint_{B_1} \mathbb{E}(v)^{\gamma} dx\right)^{\frac{1}{\gamma}} \le  C\fint_{B_2}\mathbb{E}(v)dx,
\end{align}
and the comparison estimate as below
\begin{align}\label{est:100-P}
\fint_{B_2} \mathbb{E}(u-v) dx \le \delta \fint_{B_2} \mathbb{E}(u) dx + C(\delta) \fint_{B_2} |\mathcal{F}|^p dx,
\end{align}
for all $\delta \in (0,1)$. By the definition of $B_2$, one can find $k \in \mathbb{N}$ such that $B_2 \subset B_{k\varrho}(\xi_2) \cap B_{k\varrho}(\xi_3)$. Moreover, it notes that $\mathcal{L}^n(B_2) \sim \mathcal{L}^n(B_{k\varrho}(\xi_2)) \sim \varrho^n$. One obtains from~\eqref{eq:x2-P} that
\begin{align}\label{eq:com-2-P}
\fint_{B_2}{\mathbb{E}(u)dx} \le \frac{\mathcal{L}^n(B_{k\varrho}(\xi_2))}{\mathcal{L}^n(B_2)} \fint_{B_{k\varrho}(\xi_2)}{\mathbb{E}(u)dx} \le C {\mathbf{M}}(\mathbb{E}(u))(\xi_2) \le C \lambda,
\end{align}
and similarly
\begin{align}\label{eq:com-22-P}
\fint_{B_2}{|\mathcal{F}|^pdx} \le \frac{\mathcal{L}^n(B_{k\varrho}(\xi_3))}{\mathcal{L}^n(B_2)} \fint_{B_{k\varrho}(\xi_3)}{|\mathcal{F}|^pdx} \le C {\mathbf{M}}(|\mathcal{F}|^p)(\xi_3) \le C \beta \lambda.
\end{align}
Substituting~\eqref{eq:com-2-P} and~\eqref{eq:com-22-P} into~\eqref{est:100-P}, we find
\begin{align}\label{eq:u-v-B3-P}
\fint_{B_2} \mathbb{E}(u-v) dx \le C (\delta + C(\delta) \beta)\lambda.
\end{align}
By the definition of $B_1$ we can check that $B_{r}(\zeta) \subset B_{2\varrho}(\xi) \subset B_1$ for all $r \in (0,\varrho)$. Thus we deduce from~\eqref{eq:res11-P} with an elementary inequality that
\begin{align}\nonumber
\mathcal{L}^n(\mathcal{Q}^{\lambda}_{\varepsilon} \cap B_{\varrho}(\xi)) & \le \mathcal{L}^n(\{{\mathbf{M}}^{\varrho}(\chi_{B_1}\mathbb{E}(u-v))> 2^{-p}\varepsilon^{-\theta}\lambda \} \cap B_{\varrho}(\xi))\\  \label{eq:estV-1-P}
& \qquad + \mathcal{L}^n(\{{\mathbf{M}}^{\varrho}(\chi_{B_1}\mathbb{E}(v))> 2^{-p}\varepsilon^{-\theta}\lambda \} \cap B_{\varrho}(\xi)) =: \mathrm{I} + \mathrm{II}.
\end{align}
To estimate the first term $\mathrm{I}$, we apply Lemma~\ref{lem:bound-M} for $s = 1$ and~\eqref{eq:u-v-B3-P} to arrive
\begin{align}\label{est:I-P}
\mathrm{I} &\le \frac{C}{\varepsilon^{-\theta}\lambda} \int_{B_1} \mathbb{E}(u-v) dx \le \frac{C \mathcal{L}^n(B_2)}{\varepsilon^{-\theta}\lambda} \fint_{B_2} \mathbb{E}(u-v) dx \le C (\delta\varepsilon^{\theta} + C(\delta)\beta\varepsilon^{\theta})\varrho^n.
\end{align}
We now apply Lemma~\ref{lem:bound-M} with $s = \gamma>1$ and the reverse H{\"o}lder inequality~\eqref{est:101-P} to have
\begin{align}\label{est:II-P}
\mathrm{II} &\le \frac{C \mathcal{L}^n(B_1)}{\left(\varepsilon^{-\theta}\lambda\right)^{\gamma}} \fint_{B_1} \mathbb{E}(v)^{\gamma} dx \le \frac{C \mathcal{L}^n(B_1)}{\left(\varepsilon^{-\theta}\lambda\right)^{\gamma}} \left( \fint_{B_2} \mathbb{E}(v) dx \right)^{\gamma} \le C \varepsilon^{\theta \gamma} (1 + C(\delta) \beta)^{\gamma} \varrho^n.
\end{align}
Here, the last estimate comes from~\eqref{eq:com-2-P} and~\eqref{eq:u-v-B3-P} as below
\begin{align*}
\fint_{B_2} \mathbb{E}(v) dx \le 2^{p-1} \left(\fint_{B_2} \mathbb{E}(u) dx + \fint_{B_2} \mathbb{E}(u-v) dx \right) \le C (1 + \delta + C(\delta) \beta)\lambda,
\end{align*}
Substituting~\eqref{est:I-P} and~\eqref{est:II-P} into~\eqref{eq:estV-1-P}, there holds
\begin{align*}
\mathcal{L}^n(\mathcal{Q}^{\lambda}_{\varepsilon} \cap B_{\varrho}(\xi)) &\le C \left[\delta\varepsilon^{\theta} + C(\delta)\beta\varepsilon^{\theta} + \varepsilon^{\theta \gamma} (1 + \delta + C(\delta) \beta)^{\gamma} \right] \mathcal{L}^n(B_{\varrho}(\xi)).
\end{align*}
We emphasize that for every $\delta \in (0,1)$ one may choose $\beta \le \delta C^{-1}(\delta)$ to observes 
\begin{align}\label{est:IV-P}
\mathcal{L}^n(\mathcal{Q}^{\lambda}_{\varepsilon} \cap B_{\varrho}(\xi)) &\le C (\delta\varepsilon^{\theta} + \varepsilon^{\theta \gamma}) \mathcal{L}^n(B_{\varrho}(\xi)).
\end{align}
Since $\theta \gamma > 1$, it is possible to choose $\delta$ in~\eqref{est:IV-P} satisfying $C (\delta \varepsilon^{\theta} + \varepsilon^{\theta \gamma}) < \varepsilon$, to get~\eqref{est:goal} and thus complete the proof. 
\end{proof}

\bigskip

\begin{proof}[Proof of Theorem~\ref{theo:main-norm}]
For every $0< q < \gamma$ and $0<s < \infty$, let us choose $\theta$ such that
$\frac{1}{\gamma} < \theta < \min\left\{1,\frac{1}{q}\right\}$.
Thanks to Theorem~\ref{theo:main-A}, there exist $\delta>0$, $\varepsilon_0 \in (0,1)$, $\beta > 0$ such that if problem~\eqref{eq:DOP} satisfies the assumption $(r_0,\delta)-(\mathbb{H})$ for some $r_0>0$, then the following inequality
\begin{align} \label{est:A-1}
& \mathcal{L}^n(\{{\mathbf{M}}(\mathbb{E}(u))>\varepsilon^{-\theta}\lambda\}) \leq C \varepsilon \mathcal{L}^n (\{ {\mathbf{M}}(\mathbb{E}(u))> \lambda\}) +  \mathcal{L}^n(\{{\mathbf{M}}(|\mathcal{F}|^p) > \beta \lambda\}),
\end{align}
holds  for all $\varepsilon \in (0,\varepsilon_0)$ and $\lambda>0$. By changing of variables and substituting~\eqref{est:A-1} into the integral in the norm formula of $\mathbf{M}(\mathbb{E}(u))$, one obtains that
\begin{align}\nonumber
\|\mathbf{M}(\mathbb{E}(u))\|^s_{L^{q,s}(\Omega)}  & \le C q \varepsilon^{-s\theta}\int_0^\infty{\lambda^s \left[\varepsilon \mathcal{L}^n(\{{\mathbf{M}}(\mathbb{E}(u))>\lambda\})\right]^{\frac{s}{q}}\frac{d\lambda}{\lambda}} \\ \nonumber
& \hspace{3cm} +  q \varepsilon^{-s\theta}\int_0^\infty{\lambda^s \left[\mathcal{L}^n(\{{\mathbf{M}}(|\mathcal{F}|^p) > \beta \lambda\})\right]^{\frac{s}{q}}\frac{d\lambda}{\lambda}}  \\ \label{est:5b}
& \le  C \varepsilon^{s(\frac{1}{q}-\theta)} \|\mathbf{M}(\mathbb{E}(u))\|^s_{L^{q,s}(\Omega)} +   \beta^{s} \varepsilon^{-s\theta} \|\mathbf{M}(|\mathcal{F}|^p)\|^s_{L^{q,s}(\Omega)}.
\end{align}
From the setting of $\theta$, we have $\frac{1}{q}-\theta > 0$ which ensure that the choice $\varepsilon \in (0,\varepsilon_0)$ in~\eqref{est:5b} satisfying $C \varepsilon^{s(\frac{1}{q}-\theta)} \le \frac{1}{2}$ is possible, one obtains~\eqref{eq:main-A}. The similar way can be done in the other case $s = \infty$ to finish the proof.  
\end{proof}

\bigskip

For the proof of the last theorem, one may perform the similar technique as in the proofs of Theorem~\ref{theo:main-A} and Theorem~\ref{theo:main-norm}. The difference only lies on the application of the boundedness of fractional maximal function $\mathbf{M}_{\alpha}$ instead of Hardy-Littlewood maximal function $\mathbf{M}$. We sketch here the important and different points. For more details we refer the reader to~\cite{PNJDE,MPTNsub} for the original idea and~\cite{PNmix} for the shorter one.\\

\begin{proof}[Proof of Theorem~\ref{theo:main-M-alpha}]
We first prove that for every $\alpha \in [0,\frac{n}{\gamma})$, $\theta \in (\frac{n-\alpha\gamma}{n\gamma},1)$, one can find some constants $\varepsilon_0 = \varepsilon_0(\texttt{data},\alpha,\theta) \in (0,1)$, $\delta = \delta(\texttt{data},\alpha,\theta,\varepsilon)>0$ and $\beta = \beta(\texttt{data},\alpha,\theta,\varepsilon) > 0$ such that if problem~\eqref{eq:DOP} satisfies the assumption $(r_0,\delta)-(\mathbb{H})$ for some $r_0>0$, then the following estimate
\begin{align}\label{eq:lambda-C} 
& \mathcal{L}^n(\{{\mathbf{M}_{\alpha}}(\mathbb{E}(u))>\varepsilon^{-\theta}\lambda, {\mathbf{M}_{\alpha}}(|\mathcal{F}|^p) \le \beta \lambda\}) \leq C \varepsilon \mathcal{L}^n(\{ {\mathbf{M}_{\alpha}}(\mathbb{E}(u))> \lambda\}),
\end{align}
holds for any $\varepsilon \in (0,\varepsilon_0)$ and $\lambda>0$. The main idea is also applying Lemma~\ref{lem:cover-lem} for two following sets
\begin{align}\nonumber 
& \mathcal{Q}^{\lambda}_{\alpha,\varepsilon} = \{{\mathbf{M}_{\alpha}}(\mathbb{E}(u))>\varepsilon^{-\theta}\lambda, {\mathbf{M}_{\alpha}}(|\mathcal{F}|^p) \le \beta \lambda \} \ \mbox{ and } \ \mathcal{N}^{\lambda}_{\alpha} = \{ {\mathbf{M}_{\alpha}}(\mathbb{E}(u))> \lambda \}.
\end{align}
To show the first assumption of Lemma~\ref{lem:cover-lem}, we prove an inequality as in~\eqref{est-3.1-P}. Applying Lemma~\ref{lem:bound-M} with $s=1$, there holds
\begin{align}\nonumber
\mathcal{L}^n(\mathcal{Q}^{\lambda}_{\alpha,\varepsilon}) & \le C \left(\frac{1}{\varepsilon^{-\theta} \lambda}\int_{\Omega}{\mathbb{E}(u) dx}\right)^{\frac{n}{n-\alpha}} \le C \left(\frac{1}{\varepsilon^{-\theta} \lambda}\int_{\Omega}{|\mathcal{F}|^p dx} \right)^{\frac{n}{n-\alpha}} \\ \nonumber
& \le C \left(\frac{1}{\varepsilon^{-\theta} \lambda} (\mathrm{diam}(\Omega))^{n -\alpha} \mathbf{M}_{\alpha}(|\mathcal{F}|^p)(\xi_1) \right)^{\frac{n}{n-\alpha}} \\ \label{est:C-1}
& \le C \left(\beta \varepsilon^{\theta}\right)^{\frac{n}{n-\alpha}} (\mathrm{diam}(\Omega)/r_0)^n \mathcal{L}^n (B_{r_0}) \le C \varepsilon \mathcal{L}^n (B_{r_0}).
\end{align}
In the last inequality of~\eqref{est:C-1}, we choose $\beta>0$ such that $\left(\beta \varepsilon^{\theta}\right)^{\frac{n}{n-\alpha}} (\mathrm{diam}(\Omega)/r_0)^n < \varepsilon$.

The second assumption in Lemma~\ref{lem:cover-lem} will be also proved by contradiction. In this way, we also the cutoff fractional maximal function to obtain the following estimate
\begin{align}\nonumber
\mathcal{L}^n(\mathcal{Q}^{\lambda}_{\alpha,\varepsilon} \cap B_{\varrho}(\xi)) & \le \mathcal{L}^n(\{{\mathbf{M}}^{\varrho}_{\alpha}(\chi_{B_1}\mathbb{E}(u-v))> 2^{-p}\varepsilon^{-\theta}\lambda \} \cap B_{\varrho}(\xi))\\  \label{eq:C-2}
& \hspace{3cm} + \mathcal{L}^n(\{{\mathbf{M}}^{\varrho}_{\alpha}(\chi_{B_1}\mathbb{E}(v))> 2^{-p}\varepsilon^{-\theta}\lambda \} \cap B_{\varrho}(\xi)),
\end{align}
for any $\varepsilon$ small enough and $B_1$ defined as the proof of Theorem~\ref{theo:main-A}. Next we will applying Lemma~\ref{lem:bound-M} with different values of $s$ to bound two terms in~\eqref{eq:C-2} respectively. A slight change in the proof actually shows that
\begin{align}\nonumber
\mathcal{L}^n(\mathcal{Q}^{\lambda}_{\alpha,\varepsilon} \cap B_{\varrho}(\xi)) &\le C \left[ \left(\delta\varepsilon^{\theta} + C(\delta)\beta\varepsilon^{\theta}\right)^{\frac{n}{n-\alpha}} + \left(\varepsilon^{\theta \gamma} (1 + C(\delta) \beta)^{\gamma}\right)^{\frac{n}{n-\alpha \gamma}} \right] \mathcal{L}^n(B_{\varrho}(\xi)).
\end{align}
The most important remark here is that $\frac{\theta \gamma n}{n-\alpha \gamma} > 1$ which allows us to choose $\delta$ and $\beta$ depending on $\varepsilon$ such that
\begin{align*}
\left[ \left(\delta\varepsilon^{\theta} + C(\delta)\beta\varepsilon^{\theta}\right)^{\frac{n}{n-\alpha}} + \left(\varepsilon^{\theta \gamma} (1 + C(\delta) \beta)^{\gamma}\right)^{\frac{n}{n-\alpha \gamma}} \right] < \varepsilon.
\end{align*}
Hence we may conclude~\eqref{eq:lambda-C} which implies to~\eqref{eq:main-C} by the same computation as in the proof of Theorem~\ref{theo:main-norm}. This finises the proof.
\end{proof}

\end{document}